\documentclass{article}
\usepackage[noadjust]{cite}
\usepackage[title]{appendix}
\usepackage{url}
\usepackage{fourier}
\usepackage{xcolor}
\usepackage{amsthm}
\usepackage{amsfonts}
\usepackage{amssymb}
\usepackage{amsgen}
\usepackage{amsmath}
\usepackage{amsopn}
\usepackage{verbatim}
\usepackage{xypic}
\usepackage{pgf}
\usepackage{xspace}
\usepackage{multicol}
\usepackage{makeidx}
\usepackage{eepic}
\usepackage{upref}
\usepackage{pgf}
\usepackage{tikz}
\usepackage[normalem]{ulem}
\usepackage{shuffle,yfonts}
\DeclareFontFamily{U}{shuffle}{}
\DeclareFontShape{U}{shuffle}{m}{n}{ <-8>shuffle7 <8->shuffle10}{}

\allowdisplaybreaks

\theoremstyle{theorem}
\newtheorem{theorem}{Theorem}

\newtheorem{lemma}[theorem]{Lemma}
\newtheorem{conjecture}[theorem]{Conjecture}

\theoremstyle{definition}

\newtheorem{problem}[theorem]{Problem}
\newtheorem{remark}[theorem]{Remark}

\begin{document}

\title{Largest Circle Enclosing Exactly $n$ Interior Lattice Points. II}
\date{}

\author{Jianqiang Zhao}

\maketitle

\begin{center}
Department of Mathematics, The Bishop's School, La Jolla, CA 92037, USA

Email Address: zhaoj@ihes.fr
\end{center}

\begin{abstract}
In \cite{Zhao2025Aug}, the author investigated a class of elementary plane geometry problems closely related to the theme of this work. Here, we prove a weaker version of a previous conjecture by demonstrating that there are infinitely many maximally circlable (MAC) numbers -- positive integers $n$ for which there exists a largest circle enclosing exactly $n$ interior lattice points. Furthermore, by extending numerical computations to $n \le 2700$, we identify two counterexamples to a conjecture in \cite{Zhao2025Aug} regarding the symmetry of the largest circle enclosing a strong MAC number (a MAC number $n$ where $n+1$ is non-MAC). We also propose a potential infinite family of strong MAC numbers derived from Pythagorean triples; the existence of this family would imply the infinity of non-MAC numbers, as conjectured by Zhao. Throughout this paper, we provide extensive data characterizing both MAC and strong MAC numbers alongside their corresponding largest enclosing circles.
\end{abstract}

\vspace{10px}

\noindent{\textbf{Keywords: }{lattice points, lattice circles, maximally circlable (MAC) numbers, strong MAC numbers, non-MAC numbers}}

\vspace{10px}

\section{Introduction.}\label{sec:intro}
While counting lattice points inside and on circles remains a classic and highly active problem in analytic number theory (see, e.g., \cite{Hua1942, Huxley} and numerous other works within MSC category 11H31), the specific topic of this note has received surprisingly little attention (though see \cite{Honsberger1973, Schinzel1958, Sierpinski1959, Steinhaus1964}). The present work serves as a natural extension of Problem \textbf{634} found on page 57 of \cite{exeter2017}:

\begin{quote}
What is the radius of the largest circle that you can draw on graph paper that encloses (in its interior)
\begin{alignat*}{4}
&\text{(a) no lattice points?} &&\text{(b) exactly one lattice point?}\\
&\text{(c) exactly two lattice points?}\qquad &&\text{(d) exactly three lattice points?}
\end{alignat*}
\end{quote}

For convenience, throughout this paper an $n$-\emph{circle} means a circle that encloses exactly $n$ interior lattice points.
Denote by $R_n$ to be the the largest radius of such $n$-circles. Then it is easy to find the answers to the above questions:
\begin{center}
(a) $R_0=\sqrt{2}/2$, \quad (b) $R_1=1$,\quad (c) $R_2=\sqrt{5}/2$,\quad and \quad (d) $R_3=5\sqrt{2}/3$.
\end{center}
It is also straight-forward to see that $R_4=\sqrt{10}/2$ and $R_7=5/3$.
The circles in Figure~\ref{fig:R0-4}, some of which are reproduced from \cite[Figure 1]{Zhao2025Aug}, illustrate these cases.

\begin{figure}[h]
\centering
  \includegraphics[scale=0.5]{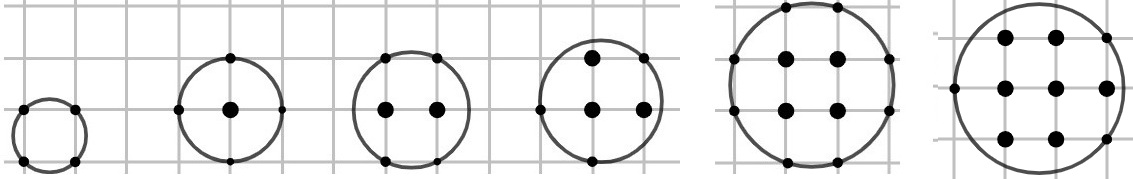}
\caption{The largest circles enclosing zero to four and seven interior lattice points.}
\label{fig:R0-4}
\end{figure}

However, an entirely different phenomenon emerges when one attempts to find $R_5$ and $R_6$; surprisingly, these two numbers do not exist. To state our results precisely, let ${\mathbb N}_0 := {\mathbb N} \cup \{0\}$ denote the set of non-negative integers. We define a number $n \in {\mathbb N}_0$ to be \emph{maximally circlable} (MAC)\footnote{While abbreviated as MC in \cite{Zhao2025Aug}, we adopt MAC here to provide a clear contrast with \emph{minimally circlable} (MIC) numbers, which we investigate in a forthcoming companion paper.} if a largest $n$-circle exists. Otherwise, $n$ is said to be \emph{non-maximally circlable} (non-MAC). Furthermore, a circle is called a \emph{lattice circle} if its circumference contains at least three lattice points, and a \emph{lattice $n$-circle} is an $n$-circle that is also a lattice circle. A \emph{largest $n$-circle} is defined as one possessing the maximal radius among all $n$-circles.

From the examples for $n\le 7$ we see that $0,1,2,3, 4$ and 7 are all MAC numbers while 5 and 6 are non-MAC numbers.
In fact, one can increase the radius of a 5- or 6-circle arbitrarily close to $R_4=\sqrt{10}/2$ but can never make it equal to or exceed $R_4$. See Section~\ref{sec:Proof56} for a quick proof different from that in \cite{Zhao2025Aug}.

It turns out that the same phenomenon for $n=5$ happens quite frequently: there are 433 non-MAC $n$ with $0\le n \le 2700$ according to our computation. However, $n=6$ case is unique in the sense that for each $n\ne 6$ in this range there always exists at least one lattice $n$-circle. We will present more facts for non-MAC numbers in Section~\ref{sec:non-MAC}.

We point out that the method in Section~\ref{sec:Proof56} is intuitive but ad hoc so that it cannot be generalized easily when $n$ is large. On the other hand, the following result which can be shown by the perturbation method is crucial for us to understand the general situation.

\begin{lemma}\label{lem:latticeCircle}  \emph{(\cite[Theorem 4]{Zhao2025Aug})}
Let $n$ be a non-negative integer. Then every largest $n$-circle, if it exists, must be a \textbf{lattice circle}.
\end{lemma}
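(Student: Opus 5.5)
We argue by contradiction with a perturbation argument. Suppose $C$ is a largest $n$-circle with center $P=(a,b)$ and radius $R=R_n$, and suppose at most two lattice points lie on its circumference. We will build an $n$-circle of radius strictly larger than $R$, which contradicts maximality. Two facts make this possible. First, the open disk contains exactly $n$ lattice points. Second, the lattice points outside the closed disk lie at distance strictly greater than $R$ from $P$, and only finitely many of them lie in any bounded region, so their minimal distance is some $R+\delta$ with $\delta>0$. Hence any sufficiently small change of center and radius, say by less than $\delta/3$, can only change the status of the lattice points on the circumference. The points strictly inside stay inside, and the points strictly outside stay outside.

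We split into cases by the number $k\in\{0,1,2\}$ of boundary lattice points.
\begin{itemize}
\item If $k=0$, enlarge the radius slightly, keeping the center fixed.
\item If $k=1$, with boundary point $Q$, move the center a small distance $\varepsilon$ directly away from $Q$ and take the new radius to be $|P'Q|=R+\varepsilon$. Then $Q$ stays on the boundary and is not enclosed in the interior.
\item If $k=2$, with boundary points $Q_1,Q_2$, the center lies on the perpendicular bisector of $Q_1Q_2$. Move the center along this bisector in the direction that increases the common distance $|P'Q_1|=|P'Q_2|$, and take that distance as the new radius.
\end{itemize}
In each case the new circle still has exactly $n$ interior lattice points, because the boundary points remain on the boundary and all other points keep their status by the $\delta$-argument above. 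Its radius also strictly exceeds $R$.

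The main obstacle is the case $k=2$: we must check that some direction along the bisector strictly increases the radius. On the bisector, write the center as $M+tu$, where $M$ is the midpoint of $Q_1Q_2$ and $u$ is a unit normal. The squared radius is then $|MQ_1|^2+t^2$, which is a strictly convex function of $t$ with its minimum at $t=0$. So for $t\neq 0$ we move to increase $|t|$, and for $t=0$ either direction works. In all cases the radius strictly increases for small displacements. For small displacements the change in radius is continuous, so we can choose the displacement small enough that the total change stays below $\delta/3$.

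This contradiction shows that $k\ge 3$, i.e.\ $C$ is a lattice circle. Note also that $C$ cannot be degenerate: since $R_n$ exists, it is finite and positive, and the number of lattice points on the boundary is finite.
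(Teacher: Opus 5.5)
Your argument is correct and is the same perturbation method the paper invokes for this lemma, which it cites as Theorem 4 of the earlier paper. With at most two decorations you enlarge the radius while moving the center away from the boundary points, along the perpendicular bisector in the two-point case. This is the same move the paper uses in the proofs of Theorems~\ref{thm:DecorStrongMAC} and \ref{thm:nonMACdecor}.

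One small tidy-up: your $\delta$ is defined only from the exterior lattice points. So you should also take the displacement smaller than $R-\max_X |PX|$, the maximum taken over the finitely many interior points $X$, to justify that these points stay inside when $k=2$. For $k=0$ and $k=1$ this is automatic.
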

We caution the reader that for a fixed $n$ such largest circles may not be unique, even modulo \emph{lattice system isometries} (i.e., isometries sending lattice points to lattice points). See Section~\ref{sec:latCircle} for the interesting cases when $n=322$ and $n=941$.

Now, through a quick computer search one can find within a few seconds that there is no lattice 6-circle and the largest lattice 5-circle
\begin{tikzpicture}[scale=0.2,domain=-0.1:3]
\draw[gray] (-0.5,-0.5) grid  (2.5,2.5);
\filldraw (0,1) circle (4pt);
\filldraw (1,0) circle (4pt);
\filldraw (1,2) circle (4pt);
\filldraw (1,1) circle (4pt);
\filldraw (2,1) circle (4pt);
\filldraw (0,0) circle (2pt);
\filldraw (0,2) circle (2pt);
\filldraw (2,0) circle (2pt);
\filldraw (2,2) circle (2pt);
\draw[fill=none](1,1) circle (1.414);
\end{tikzpicture}
has radius $\sqrt{2}<R_4=\sqrt{10}/2$.
Consequently, neither $R_5$ nor $R_6$ exists.

It is conjectured that there are arbitrary long consecutive MAC integer sequences and the set of MAC integers has a positive density in ${\mathbb N}_0$ (see \cite[Conjecture 3, (4) and (6)]{Zhao2025Aug}). In this paper, we prove the following weaker statement of these conjectures.

\begin{theorem}\label{thm:1stA} \emph{(= Theorem~\ref{thm:infinityMAC})}
There are infinitely many MAC numbers.
\end{theorem}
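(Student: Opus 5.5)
Our plan is to show that for every radius threshold there is a MAC number beyond it, using the function $N(r)$, the largest number of lattice points enclosed by some circle of radius $r$.

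\textbf{Step 1: the key sufficient condition.} For each $n$, define $S_n$ as the supremum of radii of $n$-circles. Translating a circle slightly shows that $S_n$ is always finite. The key sufficient condition is the following: if some $n$-circle has radius $\rho$, and every circle of radius at least $\rho$ encloses at least $n+1$ lattice points, then $S_n=\rho$ is attained, so $n$ is MAC.

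To use this, we set
\[
m(r)=\min_{\text{centers } c}\#\{\text{lattice points strictly inside } C(c,r)\}.
\]
The function $m(r)$ is nondecreasing in $r$: a circle of larger radius contains a smaller concentric one, so its count is at least $m$ of the smaller radius. The minimum exists because the count is lower semicontinuous in $c$ (the interior is open), and it suffices to range over $c$ in a unit square.

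\textbf{Step 2: the candidate numbers.} We take $n=m(r)$ at a jump point $r$ of $m$. Let $r_0$ be a point where $m$ jumps, say $m(r)=a$ for $r\le r_0$ and $m(r)\ge b>a$ for $r>r_0$. (We must first check that $m$ is left-continuous, i.e.\ the value at $r_0$ equals the limit from the left.) We then claim that $n=a$ is MAC with $R_a=r_0$.

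Indeed, an $a$-circle of radius $r_0$ exists, attained at a minimizing center. Any circle of radius $r>r_0$ has at least $m(r)\ge b\ge a+1$ points, so it is not an $a$-circle. Hence the supremum is the maximum $r_0$.

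\textbf{Step 3: infinitely many distinct values.} The number $m(r)\sim\pi r^2$ tends to infinity and takes integer values. Therefore $m$ has infinitely many jump points with pairwise distinct values $a$, and this gives infinitely many MAC numbers.

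\textbf{The main obstacle.} The main obstacle is the left-continuity and attainment in Step 2. We need $m(r_0)=\lim_{r\to r_0^-}m(r)$, or we must handle the other side carefully.

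If instead $m(r_0)=b$ equals the right value, we apply the argument at the value $a$ with radii $r<r_0$. In that case, circles of radius $<r_0$ with exactly $a$ points would have supremum $r_0$ without attaining it, and we must rule this out.

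Our first approach to this is semicontinuity. The count at radius $r_0$ for a fixed center is the limit from the left of the counts of concentric smaller circles, because open disks increase to the open disk. Hence $m$ is left-continuous, and a minimizing center at $r_0$ is an $a$-circle of radius exactly $r_0$.

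We would verify this compactness argument carefully: minimize over $c$ in the closed unit square, and use that the count is lower semicontinuous jointly in $(c,r)$. Lemma~\ref{lem:latticeCircle} is not needed, though it is consistent with the conclusion: the extremal circle found will be a lattice circle.
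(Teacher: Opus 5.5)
Your proof is correct, and it takes a genuinely different route from the paper's.

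\textbf{How the routes differ.} The paper works with $\rho_n$, the radius of the largest \emph{lattice} $n$-circle. It generates the MAC numbers recursively by $n_{\ell+1}=\min\{n>n_\ell:\rho_n\ge\rho_{n_\ell}\}$. The justification rests on three things: Lemma~\ref{lem:latticeCircle} (a largest $n$-circle is a lattice circle) and Lemma~\ref{lem:main}(a),(c) (monotonicity of $R_n$ and the MAC criterion), both imported from the earlier paper, plus the finiteness of lattice circles of bounded radius centered on $\Delta$.

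You bypass all of that. With the minimal-count function $m(r)$ you show directly that $a=m(r_0)$ at any jump $r_0$ is MAC with $R_a=r_0$. This is self-contained and uses no lattice circles. It also works verbatim in $\mathbb{R}^d$, so it yields Theorem~\ref{thm:infinity-d-MAC} without needing Lemma~\ref{lem:main-d}.

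What the paper's route buys is completeness. Its recursion lists \emph{every} MAC number and underlies the algorithm of Theorem~\ref{thm:effAlgorithm}. Yours only captures the values of $m$, which may be a proper subset of the MAC numbers. The two sets coincide exactly when the MAC radii are strictly increasing, which the paper only conjectures.

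\textbf{The step that needs care.} Write $f(c,r)$ for the number of lattice points strictly inside the circle of center $c$ and radius $r$. Left-continuity of $r\mapsto f(c,r)$ for each fixed $c$ does not by itself give left-continuity of $m(r)=\min_c f(c,r)$. An infimum of left-continuous nondecreasing functions need not be left-continuous. So your ``first approach'' is insufficient, and the compactness argument you name is what actually carries the proof. Concretely:
\begin{enumerate}
\item Let $a=\lim_{s\to r_0^-}m(s)$. Pick $s_k\uparrow r_0$ and centers $c_k\in[0,1]^2$ with $f(c_k,s_k)=a$.
\item Pass to a subsequence with $c_k\to c$.
\item Each of the finitely many lattice points $z$ with $|z-c|<r_0$ satisfies $|z-c_k|<s_k$ for large $k$, so $f(c,r_0)\le a$.
\item By monotonicity $m(r_0)\ge a$, so $m(r_0)=a$ and the circle centered at $c$ of radius $r_0$ is an $a$-circle.
\end{enumerate}

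\textbf{Two small slips.} In Step~1 the hypothesis should read ``every circle of radius \emph{greater than} $\rho$''. Also, finiteness of $S_n$ (which you never actually use) follows from $m(r)\to\infty$, not from translating circles.
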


We remark that the sequence of all MAC numbers is named A387044 (and its complement A387045) on the OEIS website \cite{Sloane2025}. Concerning A387045, we have the following conjecture.

\begin{conjecture}\label{conj:nonMAC}
There are infinitely many non-MAC numbers.
\end{conjecture}

It is found that the 433 non-MAC integers in the range $0\le n\le 2700$ spread out quite evenly, which naturally leads to the conjecture that the set of non-MAC integers has a positive density in ${\mathbb N}_0$  (see \cite[Conjecture 3(7)]{Zhao2025Aug}), which is a much stronger statement than Conjecture~\ref{conj:nonMAC}. To study this distribution, we call a MAC number $n$ strong if $n+1$ is a non-MAC number. From numerical evidence, we have the following conjecture relating to Pythagorean triples.

\begin{conjecture}\label{conj:PythagorasStrongMACA} \emph{(cf. Conjecture~\ref{conj:PythagorasStrongMAC})}
Suppose every prime factor $p$ of a positive integer $R$ satisfies $p\equiv 1 \pmod{4}$. Suppose the circle centered at the origin with radius $R$ is an $n_R$-circle. Then $n_R$ is a strong MAC number for infinitely many such $R$'s.
\end{conjecture}

\section{Why 5 and 6 are non-MAC numbers?}\label{sec:Proof56}
In this section, we give a short proof why the largest 5-circle and 6-circle do not exist.

We show that $R_5\ge R_4$ if $R_5$ exists, which leads to a contradiction by Lemma~\ref{lem:latticeCircle} since we have found the largest lattice 5-circle has radius $\sqrt{2}<R_4$. This follows immediately from the next claim.

\medskip
\noindent
\textbf{Claim.} For every positive $\epsilon$, we may perturb the largest 4-circle $O$ as given by the left picture in Figure~\ref{fig:case5-6} by the following two steps so that the perturbed circle with radius $R_4-\epsilon$ contains exactly 5 interior points.
\begin{enumerate}
  \item [(i)] Shrink the circle for $R_4$ by any sufficiently small amount $\varepsilon>0$ (say, $\varepsilon<R_4-1.5\approx 0.08$);
  \item [(ii)] Move the contracted circle along the direction $\langle 1, 3\rangle$ by a distance of $1.2\varepsilon$.
\end{enumerate}

\medskip
To prove this claim, let $O$ be the center of the circle and let $P,Q,R,A_1,\dots,A_5$ be the lattice points on the circumference of $O$ in Figure~\ref{fig:case5-6}. Let $X$ (resp. $U$) be the image of $Q$ (resp. $R$) on the contracted circle pictured below. To verify that $Q$ and $R$ stay outside the contracted circle after steps (i) and (ii), we zoom in at these two points separately and let $Y$ (resp. $V$) be the point on the contracted circle after step (i) such that $\overrightarrow{YQ}$ (resp. $\overrightarrow{VR}$) points to the direction of  $\langle 1, 3\rangle$. Note that such $Y$ and $V$ exist for all $\varepsilon<R_4-1.5$. One deduces quickly that $\alpha=\tan^{-1}3- \tan^{-1}(1/3)=\cos^{-1} (3/5)$ and $\beta=2\tan^{-1}(1/3)$. Hence, $|\overrightarrow{YQ}|>\varepsilon/\cos\alpha=5\varepsilon/3>1.2\varepsilon$ and $|\overrightarrow{VR}|>\varepsilon/\cos\beta=5\varepsilon/4>1.2\varepsilon$. This show that both $Q$ and $R$ remain outside the contracted circle after steps (i) and (ii).

\begin{figure}[h]
\centering
  \includegraphics[scale=0.6]{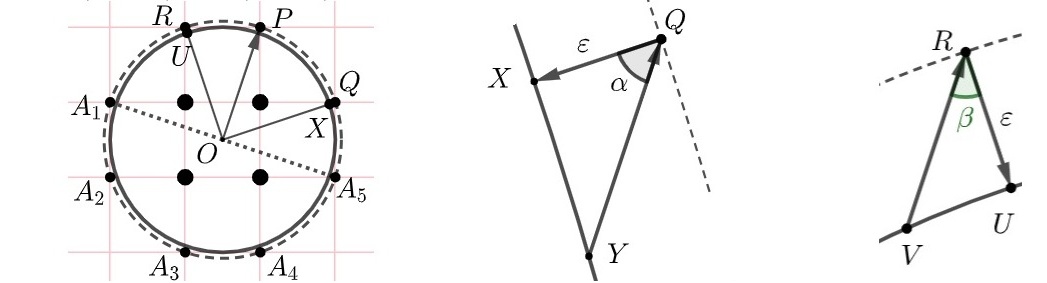}
\caption{Left: Shrink radius by $\varepsilon$. \hskip1cm Middle: Zoom in at $Q$.\hskip1cm Right: Zoom in at $R$.}
\label{fig:case5-6}
\end{figure}

It is also obvious that all $A_j$'s ($j=1,\dots,5$) remain outside the contracted circle after steps (i) and (ii) if we notice that $OP\perp A_1A_5$. The upshot is that only $P$ and the original four interior lattice points are inside the contracted circle after steps (i) and (ii). This completes the proof of the claim.

The above argument implies that $R_5\ge R_4$ if $R_5$ exists since $\varepsilon$ can be arbitrarily small so that the radius of the  contracted circle can be arbitrarily close to $R_4$. One can prove that $R_6\ge R_4$ if $R_6$ exists by similar steps (in particular, one should change the direction in step (ii) to the upward direction). On the other hand, we have found the largest lattice 5-circle has radius $\sqrt{2}<R_4$ and there is no lattice 6-circle. By Lemma~\ref{lem:latticeCircle}, we see that neither $R_5$ nor $R_6$ can exist, in other words, both 5 and 6 are non-MAC numbers.

\section{MAC numbers.}
The key idea used to prove the infinitude of MAC numbers is a detailed analysis of the lattice $n$-circles.

\subsection{Lattice equivalent circles.}
For convenience, we define the \emph{fundamental triangle} $\Delta$ to be the one with vertices at $(0,0)$, $(1/2,0)$, and $(1/2,1/2)$, including its boundary. We say two circles are \emph{lattice-equivalent} if one can be transformed to the other by lattice system isometries, i.e, isometries which send every lattice point to a lattice point. The proof of the following lemma is straight-forward and is thus omitted.

\begin{lemma}\label{lem:inDelta}
There is a unique circle with its center on $\Delta$ in every lattice equivalence class of circles.
\end{lemma}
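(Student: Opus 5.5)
The plan is to identify the group $G$ of lattice system isometries and show that the closed triangle $\Delta$ is a fundamental domain for the action of $G$ on centers. Here "unique" must be read as "unique up to the trivial ambiguity that the radius is preserved." So the statement to prove is this: for any circle with center $C$, there is a unique point $C'\in\Delta$ in the $G$-orbit of $C$. The radius is unchanged by isometries, so the circle with center $C'$ and the same radius is the desired representative.

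\textbf{Step 1: identify $G$.} An isometry sending $\mathbb Z^2$ into $\mathbb Z^2$ has the form $x\mapsto Ax+b$ with $A$ orthogonal. Applying it to the origin gives $b\in\mathbb Z^2$. Applying it to $e_1,e_2$ shows that the columns of $A$ are integer unit vectors. Hence $A$ lies in the dihedral group $D_4$ of order $8$, consisting of signed permutation matrices. So $G=\mathbb Z^2\rtimes D_4$, which is the symmetry group of the square tiling (wallpaper group $p4m$).

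\textbf{Step 2: existence.} Given $C=(x,y)$, we reduce it into $\Delta$ in three moves.
\begin{enumerate}
\item Translate by an integer vector so that $x,y\in[-1/2,1/2]$.
\item Apply the reflections $x\mapsto -x$ and $y\mapsto -y$ to get $x,y\in[0,1/2]$.
\item Apply the swap $(x,y)\mapsto(y,x)$ to get $0\le y\le x\le 1/2$.
\end{enumerate}
The final condition $0\le y\le x\le 1/2$ describes exactly $\Delta$.

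\textbf{Step 3: uniqueness.} Suppose $P,P'\in\Delta$ and $g(P)=P'$ with $g\in G$. We must show $P=P'$. The cleanest route is to use the invariant pair
\[
\phi(x,y)=\bigl(\{\!\{x\}\!\},\{\!\{y\}\!\}\bigr)\ \text{up to swap},
\]
where $\{\!\{t\}\!\}$ denotes the distance from $t$ to the nearest integer, a number in $[0,1/2]$. Each generator of $G$ preserves the unordered pair $\{\{\!\{x\}\!\},\{\!\{y\}\!\}\}$:
\begin{enumerate}
\item integer translations leave each coordinate's distance to $\mathbb Z$ unchanged;
\item the sign changes $x\mapsto -x$, $y\mapsto -y$ do the same;
\item the swap only exchanges the two entries.
\end{enumerate}
Since these generate $G$, the unordered pair is a $G$-invariant. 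For a point of $\Delta$ we have $\{\!\{x\}\!\}=x$ and $\{\!\{y\}\!\}=y$ with $y\le x$. So the point is recovered from the invariant as ($\max$, $\min$), and therefore $P=P'$.

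\textbf{Main obstacle.} The delicate part is Step 1, ruling out exotic isometries. This requires the observation that an isometry is affine, and that integrality of $b$ and of $A$'s columns forces $A\in D_4$. Everything else is bookkeeping. Boundary points of $\Delta$ cause no trouble, because the invariant argument handles them uniformly, even though their stabilizers are nontrivial.
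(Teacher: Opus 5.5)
Your proof is correct and complete. The paper omits this proof as straightforward, and what you supply is the natural argument behind that remark: lattice isometries are $v\mapsto Av+b$ with $A$ a signed permutation matrix and $b\in\mathbb{Z}^2$; any center reduces into $0\le y\le x\le 1/2$; and the unordered pair of distances to the nearest integer is an invariant that separates points of $\Delta$. The only quibble is that you need not hedge about reading ``unique'' up to radius: every circle in a lattice equivalence class has the same radius, so the lemma is literally the statement that $\Delta$ meets each orbit of centers in exactly one point, and that is what you proved.
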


In the rest of the paper, we will only consider the circles centered on $\Delta$ if we do not mention otherwise.

\subsection{Criterion of MAC and non-MAC numbers.}
For a MAC integer $n$, let $R_n$ be the largest radius of the $n$-circles. For a non-MAC $n$, let $R_n$ be the \emph{least upper bound of the radii} (LUBOR) of all $n$-circles.

The following results are contained in \cite[Main Theorem]{Zhao2025Aug}.

\begin{lemma}\label{lem:main}
Let $\{R_n: n\ge 0\}$ be the positive sequence of MAC-radii (for MAC integers $n$) and LUBORs (for non-MAC integers $n$).
Then we have:
\begin{enumerate}
  \item[(a)] $\{R_n: n\ge 0\}$ is a non-decreasing sequence.

  \item[(b)] Suppose $k<\ell$ are two consecutive MAC integers and $n$ is non-MAC such that $k<n<\ell$. Then $R_n=R_k$, namely, $R_k$ is the LUBOR for all such non-MAC $n$ immediately following $k$.

  \item[(c)] Suppose $k<n$, $k$ is a MAC integer and all the integers between $k$ and $n$ (exclusive) are non-MAC. Then the radius of the largest lattice $n$-circle is \textbf{strictly} smaller than $R_k$ if and only if $n$ is non-MAC.
\end{enumerate}
\end{lemma}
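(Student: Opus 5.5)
We prove the three parts in order. The ingredients are a one-point-at-a-time growth argument for (a), a compactness argument for (b), and Lemma~\ref{lem:latticeCircle} for (c). Throughout we use two facts. First, every $R_n$ is finite, since a disk of radius $r$ contains at least a constant times $r^2$ lattice points, so $n$-circles have bounded radii. Second, by Lemma~\ref{lem:inDelta} we may take all centers in the compact triangle $\Delta$.

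\emph{Part (a).} It suffices to show that for every $n$-circle $C$ of radius $r$ and every $\varepsilon>0$ there is an $(n+1)$-circle of radius $>r-\varepsilon$.
\begin{enumerate}
\item Shrink $C$ about its center to radius $r-\delta$, with $\delta<\varepsilon/2$ small. The $n$ interior points lie at distance strictly less than $r$ from the center, so they stay inside. Every other lattice point lies at distance at least $r$, so it is now strictly outside with room to spare.
\item Move the center by a tiny amount to a generic point, one from which no two lattice points are equidistant. The interior count is still $n$.
\item Increase the radius continuously. The count jumps by exactly one each time a lattice point is crossed. Just after the first crossing we have an $(n+1)$-circle whose radius is at least $r-\delta$, which exceeds $r-\varepsilon$.
\end{enumerate}
Taking suprema gives $R_{n+1}\ge R_n$.

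\emph{Part (b).} The key claim is that if $n$ is non-MAC, then $R_n\le R_{n-1}$.
\begin{enumerate}
\item Choose $n$-circles with centers in $\Delta$ and radii tending to $R_n$. By compactness, pass to a subsequence converging to a circle $C^*$ of radius $R_n$.
\item Any lattice point strictly inside $C^*$ is strictly inside all late terms of the sequence. Hence $C^*$ encloses some $m\le n$ interior points.
\item If $m=n$, then $C^*$ is an $n$-circle of radius $R_n$, so a largest $n$-circle exists. This contradicts $n$ being non-MAC, so $m<n$.
\item Since $C^*$ is an $m$-circle, $R_n\le R_m$, and $R_m\le R_{n-1}$ by (a).
\end{enumerate}
Combined with (a), this gives $R_n=R_{n-1}$ for every non-MAC $n$. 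Inducting upward from $k$ through the run of non-MAC integers yields $R_n=R_k$.

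\emph{Part (c).} The hypothesis and (b) give $R_{n-1}=R_k$; the argument of (b) does not need $\ell$ to exist.
\begin{itemize}
\item If $n$ is MAC, Lemma~\ref{lem:latticeCircle} says a largest $n$-circle is a lattice circle. So the largest lattice $n$-circle has radius $R_n\ge R_{n-1}=R_k$, which is not strictly smaller.
\item If $n$ is non-MAC, then $R_n=R_k$ by the claim in (b). Any lattice $n$-circle has radius at most $R_n$. Equality would mean the supremum is attained, making $n$ MAC. So every lattice $n$-circle has radius strictly less than $R_k$. Since there are only finitely many lattice circles of bounded radius with centers in $\Delta$, a largest one exists when there is at least one.
\end{itemize}

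The main obstacle is the compactness step in (b). We must check carefully that interior points of the limit circle are interior for nearby circles, and that the limiting count $m$ is genuinely an admissible circle count. This holds because each circle with $m$ interior points is by definition an $m$-circle, so $R_m$ bounds its radius.
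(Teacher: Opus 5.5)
Your proof is correct and follows essentially the same route as the paper's sketch: a perturbation argument for (a); for (b), a compactness argument on circles centered in $\Delta$ whose limit circle must lose interior points to its boundary, inducted through the run of non-MAC integers; and for (c), Lemma~\ref{lem:latticeCircle} combined with (a) and (b). Your version of (b) is slightly leaner than the paper's, since the chain $R_n\le R_m\le R_{n-1}$ removes the need to show that the limit circle is a lattice circle.
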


For convenience of the reader, we briefly recall the idea of proof of Lemma~\ref{lem:main}.
Part (a) can be proved by the perturbation method we used when showing $R_5\ge R_4$ in Section 1. For part (b), let $n=k+j$ ($1\le j<\ell-k$). Then an induction on $j$ yields a proof. The key ideas include the following three facts: (i) if $n$ is non-MAC then there is a sequence of circles (centered on $\Delta$) enclosing the same set $S$ of $n$ lattice points such that the radius of the limit circle $\mathcal O$ is the LUBOR $R_n$; (ii) $\mathcal O$ is a lattice circle; and (iii) $\mathcal O$ cannot contain any points in its interior other than $S$ and, moreover, $\mathcal O$ must lose some $S$-points to its boundary. Part (c) then follows from (a), (b) and Lemma~\ref{lem:latticeCircle}. We refer the interested reader to \cite{Zhao2025Aug} for a complete proof of Lemma~\ref{lem:main}.

\subsection{Infinitude of MAC numbers.}
We are now ready to prove one of the main results of this paper.

\begin{theorem}\label{thm:infinityMAC}
There are infinitely many MAC numbers.
\end{theorem}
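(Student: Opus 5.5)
The proof goes by contradiction, using Lemma~\ref{lem:main} and Lemma~\ref{lem:latticeCircle}. Suppose there are only finitely many MAC numbers, and let $k$ be the largest. Then every $n>k$ is non-MAC. By Lemma~\ref{lem:main}(b), which extends verbatim when there is no next MAC integer $\ell$ (the induction on $j$ only uses that the intermediate integers are non-MAC), the LUBOR satisfies $R_n=R_k$ for all $n>k$. So every $n$-circle with $n>k$ has radius at most $R_k$, a fixed finite number. The contradiction comes from producing, for large $n$, an $n$-circle of radius larger than $R_k$.

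This is elementary area counting. A circle of radius $r$ contains at most $\pi (r+\sqrt2/2)^2$ interior lattice points, because the unit squares attached to those points lie inside the concentric disk of radius $r+\sqrt2/2$. Hence an $n$-circle with $n>\pi(R_k+1)^2$ must have radius larger than $R_k$, contradicting $R_n=R_k$.

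We also need such an $n$-circle to exist, i.e.\ every sufficiently large $n$ must be realized as an exact interior count. The quickest route avoids this by working with one large circle. Pick a lattice circle $C$ of radius $r>R_k+1$, for instance the circle of radius $5^m$ centered at the origin. Its interior count $n_0$ already exceeds $\pi R_k^2+$ (boundary error), and since $n_0$ is realized by $C$ itself, we get $R_{n_0}\ge r>R_k$. This contradicts $R_{n_0}=R_k$, which holds because $n_0>k$: the area bound gives $n_0\ge \pi(r-\sqrt2/2)^2>\pi R_k^2+\dots\ge$ the count of any circle of radius $R_k$, and in particular $n_0>k$. So the only ingredient is the definition of $R_n$ as a supremum (or maximum) of radii of $n$-circles, which is at least the radius of any particular $n$-circle.

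The main obstacle is justifying $R_n=R_k$ for \emph{all} $n>k$ when there is no larger MAC integer. Lemma~\ref{lem:main}(b) is stated for $k<n<\ell$ with $\ell$ MAC, so I will have to check that its inductive proof, sketched in facts (i)--(iii), does not actually use $\ell$. I would first try to rederive the inequality $R_n\le R_k$ directly. For non-MAC $n$, fact (i) gives a limit lattice circle $\mathcal O$ of radius $R_n$ that has lost some $S$-points to its boundary, so its interior count $n'$ satisfies $n'<n$. Its radius is at most $R_{n'}$, and by monotonicity from Lemma~\ref{lem:main}(a), $R_n=\text{radius}(\mathcal O)\le R_{n'}\le R_n$. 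Inducting downward from $n$ to $k$ then gives $R_n=R_k$. Combined with the counting step above, this finishes the proof.
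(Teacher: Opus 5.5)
Your argument is correct, but it is organized differently from the paper's proof.

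\textbf{The paper's route.} The paper works with $\rho_n$, the radius of the largest \emph{lattice} $n$-circle. Since only finitely many lattice circles centered on $\Delta$ have radius $\le M$, it gets $\limsup_n\rho_n=\infty$. It then builds the MAC numbers recursively as $n_{\ell+1}=\min\{n>n_\ell:\rho_n\ge\rho_{n_\ell}\}$. Lemma~\ref{lem:main}(a) shows the skipped indices are non-MAC, and Lemma~\ref{lem:main}(c) shows $n_{\ell+1}$ is MAC.

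\textbf{Your route.} You argue by contradiction directly with the LUBORs, and your argument has two ingredients:
\begin{enumerate}
\item a tail version of Lemma~\ref{lem:main}(b), which you rederive from facts (i) and (iii): for non-MAC $n$ the limit circle is an $n'$-circle with $n'<n$, so $R_n\le R_{n'}$, and strong induction together with (a) gives $R_n\le R_k$ for all $n$;
\item the trivial existence of circles of arbitrarily large radius.
\end{enumerate}
This needs neither Lemma~\ref{lem:latticeCircle}, nor part (c), nor the finiteness of lattice circles of bounded radius. Your explicit check that (b) survives when no later MAC integer $\ell$ exists is also well placed. The paper's appeal to (c) tacitly needs the same tail case, since the direction ``$n$ non-MAC $\Rightarrow \rho_n<R_k$'' of (c) is derived from (b). The situation with no later MAC is exactly the one being ruled out.

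\textbf{What each buys.} The paper's version is constructive. It characterizes the MAC numbers as the indices where $\rho_n$ attains a new weak running maximum, and this is what feeds the algorithm of Theorem~\ref{thm:effAlgorithm}. Yours is shorter and more self-contained, but it only shows that the set of MAC numbers is infinite.

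\textbf{A small slip.} From your bounds $n_0\ge\pi(r-\sqrt2/2)^2$ and $k\le\pi(R_k+\sqrt2/2)^2$, you need $r>R_k+\sqrt2$, not merely $r>R_k+1$. This is harmless, because $r=5^m$ can be taken as large as you like. In fact the counting step is unnecessary: if $n_0\le k$, Lemma~\ref{lem:main}(a) already gives $R_{n_0}\le R_k$. So any circle of radius $>R_k$ yields the contradiction, regardless of how many lattice points it encloses.
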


\begin{proof}
For each $n\ge 0$, let $\rho_n$ be the radius of the largest {\bf lattice} $n$-circle. Set $\rho_n=0$ if such a lattice circle does not exist (e.g., $n=6$).

First, it is easy to see that $n$ is MAC if and only if $\rho_n=R_n$. Indeed, if $n$ is MAC then the largest $n$-circle must be a lattice circle by Lemma~\ref{lem:latticeCircle}, which implies that $\rho_n=R_n$. On the other hand, if $\rho_n=R_n$ then the circle with the corresponding $\rho_n$ is a largest $n$-circle and therefore $n$ is MAC.

Second, for each fixed $M>0$, there are only finitely many lattice circles centered on the fundamental triangle $\Delta$ such that their radii are no more than $M$. By Lemma~\ref{lem:inDelta} we see that
\begin{equation}\label{equ:supRho}
\limsup_{n\to \infty}\rho_n=\infty.
\end{equation}

Now we may determine the sequence of all MAC numbers by the following process. Take $n_1=0$. If we have already found $n_1<\dots<n_\ell$ such that these numbers are the only MAC numbers no more than $n_\ell$, then we set $n_{\ell+1}=\min\{n> n_\ell:\rho_n\ge \rho_{n_\ell}\}$ which exists by \eqref{equ:supRho}. We now show $n_{\ell+1}$ is MAC and all numbers $k$ with $n_\ell<k<n_{\ell+1}$ are non-MAC.
Indeed, $\rho_k<\rho_{n_\ell}$ for $n_\ell<k<n_{\ell+1}$ by definition of $n_{\ell+1}$. Hence, $k$ is non-MAC by Lemma \ref{lem:main}(a). This in turn implies that $n_{\ell+1}$ is MAC by Lemma \ref{lem:main}(c).

This completes the proof of Theorem \ref{thm:infinityMAC}.
\end{proof}

It is conjectured \cite[Conjecture 3, (4) and (6)]{Zhao2025Aug} that there are arbitrarily long consecutive MAC integer sequences and the density of MAC numbers in ${\mathbb N}_0$ is greater than 80\%. Our expanded search for $n\le 2700$ provides further confirmation of these. In Table~\ref{tbl:MACdistrution} we collect the distribution data of MAC numbers in every 300 long interval. Note the MAC numbers should be evenly distributed with density $> 80\%$. We also found in this range the longest consecutive MAC numbers are from 2026 to 2081, totalling 56 numbers.

\begin{table}[h]
\caption{MAC number distribution for $0\le n\le 2700$.}
\centering\begin{tabular}{|c|c|c|c|c|c|c|c|c|c|}
\hline
$\lfloor n/300 \rfloor$  &   0 &  1  & 2  & 3 & 4 & 5 & 6 & 7 & 8 \\
\hline
$\sharp\{n|$  $n$ is MAC$\}$ & 243& 251& 246& 253& 250& 260& 255& 256 & 253  \\
\hline
\end{tabular}
\label{tbl:MACdistrution}
\end{table}

\subsection{An improved algorithm to determine MAC numbers.}
We point out that the proof of Theorem~\ref{thm:infinityMAC} above is constructive in nature. It provides a precise procedure to determine whether a given $n$ is MAC or not.
For example, after discovering $n=889$ is a MAC number with $\rho_{889}=R_{889}=17$, we can find with the aid of a computer that
\begin{align*}
     \rho_{890}=&\,\tfrac{\sqrt{461761}}{40}\approx 16.988,\quad
     \rho_{891}=\tfrac{13\sqrt{19193}}{106}\approx 16.990,\\
     \rho_{892}=&\,\tfrac{\sqrt{242905}}{29}\approx 16.994,\quad
     \rho_{893}=\tfrac{\sqrt{392896610}}{1166}\approx 16.999.
\end{align*}
but $ \rho_{894}=\tfrac{17\sqrt{289445}}{538}>17$.
Hence, 894 is a strong MAC number while 890, 891, 892, and 893 are all non-MAC with their LUBORs $R_{890}=\cdots=R_{893}=17$.

However, the procedure above is not the most efficient to detect if $n$ is MAC or not for large $n$ because it requires us to compute the exact values of $\rho_k$ for all $0\le k\le n$. We now show that, in fact, it suffices to compute a much smaller number (about $\sqrt{2\pi n}$, see Remark~\ref{rem:smallSetRadii}) of radii $\rho_k$'s. We first recall a lemma.

\begin{lemma}\label{lem:Rbound}  \emph{(\cite[Theorem 9]{Zhao2025Aug})}
Let $n$ be a non-negative integer. Suppose $R_n$ is the MAC-radius or the LUBOR of $n$. Then $$\sqrt{n/\pi}<R_n<\sqrt{2}+\sqrt{n/\pi}.$$
\end{lemma}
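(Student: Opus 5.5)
We need to prove $\sqrt{n/\pi}<R_n<\sqrt{2}+\sqrt{n/\pi}$, where $R_n$ is the MAC-radius or the LUBOR. The plan is to compare the disk area with the area of unit squares attached to lattice points. For each lattice point $P=(a,b)$, let $Q_P=[a-\tfrac12,a+\tfrac12]\times[b-\tfrac12,b+\tfrac12]$. These squares tile the plane, and every point of $Q_P$ is within distance $\sqrt2/2$ of $P$. The two inequalities come from the two inclusions between a disk and a union of such squares.

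\textbf{Lower bound.} We show $R_n>\sqrt{n/\pi}$ by constructing an $n$-circle of radius at least $\sqrt{n/\pi}$. Fix $r=\sqrt{n/\pi}$ and consider the closed disk of radius $r$ centered at a variable point $c$. Averaging the number of lattice points in $D(c,r)$ over $c$ in a unit square gives exactly $\pi r^2=n$. Hence some center has at least $n$ interior points, or at most $n$; we need a center with exactly $n$.

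\begin{itemize}
\item Use continuity: as $c$ moves generically, the count changes by $\pm1$ at a time.
\item By averaging there are centers with count $\ge n$ and centers with count $\le n$. Moving generically between them, we pass through a center with exactly $n$ interior points, and a small further motion keeps the count $n$.
\item Then the largest radius $r'\ge r$ at that center is still an $n$-circle, so $R_n\ge r$.
\end{itemize}

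Strictness takes more work. One route is to note that the exact equality cases can be perturbed; alternatively, Lemma~\ref{lem:main}(a) together with the monotonicity argument gives some slack. I expect strictness, i.e.\ ruling out $R_n=\sqrt{n/\pi}$ exactly, to be the main nuisance. It should follow because a generic $n$-circle found above can be enlarged slightly before a new boundary lattice point enters, unless it is already a lattice circle. In that case, the radius squared is rational while $n/\pi$ is irrational.

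\textbf{Upper bound.} Take any $n$-circle of radius $r$ and center $c$ (for a LUBOR, take radii approaching $R_n$). If $r>\sqrt2$, then every point of the disk of radius $r-\sqrt2/2$ lies in some $Q_P$ with $|P-c|<r-\sqrt2/2+\sqrt2/2=r$, so $P$ is interior. Hence
\[
\pi\bigl(r-\tfrac{\sqrt2}{2}\bigr)^2\le n,
\]
which gives $r\le \sqrt{n/\pi}+\sqrt2/2<\sqrt2+\sqrt{n/\pi}$. The limit for the LUBOR keeps the strict inequality because of the spare $\sqrt2/2$. If $r\le\sqrt2$, the bound is trivial.
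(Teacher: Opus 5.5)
Your argument is correct, and it is more self-contained than what the paper relies on. The paper does not prove this lemma here; it cites Theorem~9 of \cite{Zhao2025Aug}. Its $d$-dimensional version in the last section does show the intended structure. The lower bound is deduced from the existence of an $n$-enclosing circle of area exactly $n$. For general $d$ the paper only poses this as Conjecture~\ref{conj:DecorStrong-d-MAC}(a), so its $d$-dimensional lower bound is stated conditionally. The upper bound appears to come from a lattice-cell covering whose slack is the full cell diagonal, as the $\sqrt{2}$ here and the $\sqrt{d}$ there suggest.

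You instead produce the area-$n$ circle yourself. The identity $\int_{[0,1)^2}\#\{P\in\mathbb{Z}^2:|P-c|<r\}\,dc=\pi r^2$ forces one of two things: either the count is $n$ almost everywhere, or there are sets of positive measure where it is $>n$ and where it is $<n$. In the second case, a generic segment joining them crosses the circles $|c-P|=r$ one at a time, so the count equals $n$ on an open interval. Nothing here is specific to the plane. In $\mathbb{R}^d$ the average is the volume of the ball, and the centers lying on two of the spheres $|c-P|=r$ form a codimension-$2$ set that almost every segment misses. So your argument appears to prove Conjecture~\ref{conj:DecorStrong-d-MAC}(a), making the lower bound unconditional in every dimension. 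On the other side, centring the cells $Q_P$ at the lattice points (half-diagonal $\sqrt{2}/2$) gives the sharper bound $R_n\le\sqrt{n/\pi}+\sqrt{2}/2$, and the strict inequality follows automatically.

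Two tidy-ups.

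\emph{Strictness of the lower bound.} This is easier than you fear. Take the center inside the open interval where the count is $n$ but off every circle $|c-P|=r$. Then no lattice point is at distance exactly $r$, so the $(n+1)$-st smallest distance from $c$ strictly exceeds $r$, and it is the radius of an $n$-circle. Your fallback ``unless it is already a lattice circle'' is unnecessary, and as stated it misses circles through one or two lattice points. If you want an irrationality argument, apply it to $R_n$ itself. $R_n$ is always the radius of a lattice circle (Lemma~\ref{lem:latticeCircle} if $n$ is MAC, Lemma~\ref{lem:main}(b) otherwise). Hence $R_n^2\in\mathbb{Q}$, while $n/\pi\notin\mathbb{Q}$ for $n\ge 1$.

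\emph{The upper bound.} Run the covering for every $r>\sqrt{2}/2$ rather than only for $r>\sqrt{2}$. This also handles $n=0$, where ``$r\le\sqrt{2}$'' alone does not yield the strict inequality.
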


\begin{theorem}\label{thm:effAlgorithm}
Let $n$ be a positive integer and $k=\lfloor (\sqrt{n}-\sqrt{2\pi})^2 \rfloor$. If
$ \rho_n\ge \rho_l$ for all $k\le l<n$ then $n$ is a MAC number. Otherwise, $n$ is non-MAC.
\end{theorem}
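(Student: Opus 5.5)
The plan is to first turn the MAC criterion into a statement about the running maximum of the lattice radii $\rho_m$, and then use Lemma~\ref{lem:Rbound} to show that indices $m<k$ can never attain that maximum. Concretely, I will first establish
\begin{equation*}
R_n=\max_{0\le m\le n}\rho_m \qquad\text{for every } n\ge 0 .
\end{equation*}
Combined with the observation in the proof of Theorem~\ref{thm:infinityMAC} that $n$ is MAC if and only if $\rho_n=R_n$, this gives: $n$ is MAC if and only if $\rho_n\ge\rho_m$ for all $m<n$. The theorem then amounts to discarding the indices $m<k$.

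For the identity, I argue in two directions.
\begin{enumerate}
\item[(i)] Lower bound. Any lattice $m$-circle is an $m$-circle, so $\rho_m\le R_m$. By Lemma~\ref{lem:main}(a), $R_m\le R_n$ for $m\le n$. Hence $\max_{m\le n}\rho_m\le R_n$.
\item[(ii)] Upper bound. Let $j\le n$ be the largest MAC integer not exceeding $n$; it exists since $0$ is MAC. If $j=n$, then $R_n=\rho_n$. Otherwise every integer in $(j,n]$ is non-MAC, so Lemma~\ref{lem:main}(b) gives $R_n=R_j$. In both cases $R_n=R_j=\rho_j$, since $j$ is MAC. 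Thus the maximum is attained at $m=j$.
\end{enumerate}
The only delicate point is that Lemma~\ref{lem:main}(b) is stated with a next MAC integer $\ell$. Such an $\ell$ exists by Theorem~\ref{thm:infinityMAC}, so (b) applies to every non-MAC $n$.

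Next I show that indices $m<k$ are irrelevant. For $m<k$, that is $m\le k-1$, we have $m<(\sqrt n-\sqrt{2\pi})^2$; here I use $n\ge 2\pi$, since otherwise no such $m$ exists. Then Lemma~\ref{lem:Rbound} gives
\begin{equation*}
\rho_m\le R_m<\sqrt2+\sqrt{m/\pi}<\sqrt2+\frac{\sqrt n-\sqrt{2\pi}}{\sqrt\pi}=\sqrt{n/\pi}<R_n .
\end{equation*}
So every $\rho_m$ with $m<k$ lies strictly below $R_n=\max_{m\le n}\rho_m$. The maximum is therefore attained at some index in $[k,n]$, i.e. $R_n=\max_{k\le l\le n}\rho_l$.

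Both directions of the theorem now follow.
\begin{enumerate}
\item[(a)] If $\rho_n\ge\rho_l$ for all $k\le l<n$, then $\rho_n=\max_{k\le l\le n}\rho_l=R_n$, so $n$ is MAC.
\item[(b)] If instead $\rho_n<\rho_l$ for some $k\le l<n$, then $\rho_n<\rho_l\le R_n$, so $\rho_n\ne R_n$ and $n$ is non-MAC.
\end{enumerate}

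I expect the main obstacle to be the identity $R_n=\max_{m\le n}\rho_m$, and specifically its careful derivation from Lemma~\ref{lem:main}(b) for non-MAC $n$, including the existence of the next MAC integer. The remaining steps are a direct inequality chase.
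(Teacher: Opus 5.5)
Your proof is correct apart from one small slip (below), and it is organized differently from the paper's.

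\textbf{Comparison with the paper.} The paper never isolates the identity $R_n=\max_{0\le m\le n}\rho_m$. For the non-MAC direction it takes the largest $l\in[k,n)$ with $\rho_l>\rho_n$. It then passes to the largest MAC number $m\le l$ and uses Lemma~\ref{lem:main}(c) to get $\rho_m>\rho_a$ for all $m<a\le n$. For the MAC direction it splits according to whether $k$ is MAC. In Case (1) it follows the non-decreasing subsequence of $\rho$ starting at $\rho_k$ and invokes Lemma~\ref{lem:main}(c). In Case (2) it compares the last MAC number $m<k$ directly with $\rho_n$.

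You instead derive the single identity from Lemma~\ref{lem:main}(a),(b) and Theorem~\ref{thm:infinityMAC}. After that, the non-MAC direction is the one line $\rho_n<\rho_l\le R_l\le R_n$, and the MAC direction needs no case split. Your route is also more careful at one point. In Case (2) the paper asserts $\sqrt{n/\pi}<\rho_n$ ``by Lemma~\ref{lem:Rbound} again'', but that lemma only bounds $R_n$, and $\rho_n=R_n$ is what is being proved. You compare the indices $m<k$ with $R_n$ rather than $\rho_n$, so you use exactly what Lemma~\ref{lem:Rbound} provides.

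\textbf{The slip.} You claim that when $n<2\pi$ no index $m<k$ exists. This is false for $n=1$ and $n=2$: there $(\sqrt n-\sqrt{2\pi})^2\approx 2.27$ and $1.19$, so $k=2$ and $k=1$ respectively. For $n=1$, your assertion that every $\rho_m$ with $m<k$ lies strictly below $R_n$ would give $\rho_1<R_1$, which is false. Moreover the interval $[k,n]$ is empty in that case.

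These two cases must be checked by hand. Your criterion ($n$ is MAC if and only if $\rho_n\ge\rho_m$ for all $m<n$) together with $\rho_0=\sqrt2/2<\rho_1=1<\rho_2=\sqrt5/2$ shows that $1$ and $2$ are MAC. For these $n$ the theorem's hypothesis holds (vacuously when $n=1$), so the ``otherwise'' branch never arises. For $3\le n\le 6$ one has $k=0$, and for $n\ge 7>2\pi$ your inequality chain applies as written, so nothing else is affected.
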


\begin{proof} If $\rho_n<\rho_l$ for some $k\le l<n$ then clearly it is non-MAC by Lemma~\ref{lem:main}(c). Indeed,
assume
\begin{equation*}
   \lambda=\max\{l: k\le l<n, \rho_n<\rho_l\}.
\end{equation*}
If we take $m$ to be the largest MAC number such that $m\le \lambda$ then $\rho_m>\rho_a$ for all $m<a\le n$ by Lemma~\ref{lem:main}(c).
This implies $n$ is non-MAC.

We now assume $\rho_n\ge \rho_l$ for all $k\le l<n$.

Case (1). If $k$ is MAC, then the non-decreasing subsequence of $\{\rho_k,\dots,\rho_n\}$ starting with $\rho_k$ must end with $\rho_n$, which must consist of only MAC numbers by Lemma~\ref{lem:main}(c). Thus, $n$ is a MAC number.

Case (2). If $k$ is non-MAC, then we let $m$ be the largest MAC number smaller than $k$. Since $\rho_m=R_k$ is LUBOR of $k$, by Lemma~\ref{lem:Rbound} we see that $\rho_m<\sqrt{k/\pi}+\sqrt{2}$. By the definition of $k$, we obtain
$$
\rho_m<(\sqrt{n}-\sqrt{2\pi})/\sqrt{\pi}+\sqrt{2}=\sqrt{n/\pi}<\rho_n
$$
by Lemma~\ref{lem:Rbound} again. The same argument as in Case (1) applies here with $k$ replaced by $m$, which shows that $n$ is a MAC number.
\end{proof}

\begin{remark}\label{rem:smallSetRadii}
We point out that the number of radii we need to compute in Theorem~\ref{thm:effAlgorithm} is $n-k+1\approx \sqrt{2\pi n}-2\pi+1$.
It would be very interesting to find a more efficient algorithm better than computing $O(n^{0.5})$ radii to determine if $n$ is MAC or not.
\end{remark}

\section{Lattice circles.}\label{sec:latCircle}
We have seen that lattice circles play the key roles in the proof of Theorem~\ref{thm:infinityMAC}. In this section, we will consider a few interesting problems concerning only lattice circles. For convenience, by a \emph{decoration} of a circle we mean a lattice point on the circumference of the circle. When we consider a circle together with its decorations we call it a \emph{decorated circle}.

\subsection{Uniqueness of the largest MAC $n$-circles.}
Since a largest $n$-circle (if it exists) must be a lattice circle by Lemma~\ref{lem:latticeCircle}, one may wonder if such a circle is unique up to lattice-equivalence. It turns out that this holds true for all MAC numbers $n\le 321$, however, this fails for the MAC number $322$ since we have the following two lattice non-equivalent largest 322-circles with the same radius $R_{322}=5\sqrt{4810}/34$: $M_{322,1}$ (resp. $M_{322,2}$) is centered at $(1/2,3/34)$ (resp. $(9/34,9/34)$ ) containing the following decorations
\begin{align*}
& M_{322,1}: (-7, 7), (-1, -10), (2, -10), (8, 7);\\
& M_{322,2}: (-9, -4), (-5, 9), (-4, -9), (9, -5).
\end{align*}
The non-lattice-equivalence between $M_{322,1}$ and $M_{322,2}$ can also been seen from their decorations: both are trapezoids but they are incongruent. Such non-uniqueness happens also for $n=941$ with two MAC circles: $M_{941,1}$ centered at $(51/134,29/134)$ and $M_{941,2}$ centered at $(57/134,27/134)$, and their decorations are given by
\begin{align*}
& M_{941,1}: (-16,6),(-8,-15),(14,11);\\
& M_{941,2}: (-4,17),(-2,-17),(17,-5).
\end{align*}
These two numbers are the only MAC numbers with more than one lattice non-equivalent MAC $n$-circles for all $n\le 2700$.
However, the number of decorations is the same in both examples. We thus pose the following question.

\begin{problem}\label{prob:decor}
Is there a MAC number $n$ which has two MAC $n$-circles with different number of decorations?
\end{problem}

\subsection{Different decorations on lattice circles with the same radius.}
If we remove the ``largest'' condition then we find two $230$-lattice circles of the same radius that pass through different number of lattice points. Let $C_1$ (resp. $C_2$) be the circle centered at $(1/14,1/14)$ (resp. $(1/2,3/14)$) with radius $85\sqrt{2}/14\approx 8.586$. Then, there are five decoration on $C_1$ while there are only four on $C_2$:
\begin{align*}
& C_1: (-6, -6), (3,-8),(7,-5),(-5,7),(-8,3),\\
& C_2: (-8, -1), (-2, -8), (3, -8), (9, -1).
\end{align*}
Nevertheless, neither circles is the largest 230-circle which is given by the one centered at $(163/334,119/334)$ with radius $145 \sqrt{10988266}/55778\approx 8.617$ with only three decorations: $(-5, 7), (-4, -7)$ and $(7, 6)$.

\subsection{Different number of lattice points inside lattice circles with the same radius.}
It is conjectured (see \cite[Conjecture 3(3)]{Zhao2025Aug}) that the sequence of all MAC-radii $\{R_{n_\ell}\}_{\ell\ge 1}$ is strictly increasing, where $\{n_\ell\}_{\ell\ge 1}$ is the set of all MAC numbers. So far we only know this sequence is non-decreasing by Lemma~\ref{lem:main}(a).
But for general lattice circles, we find that the following two have the same radius $\sqrt{3770}/14$ while enclosing different number of interior lattice points: $C_3$ (resp. $M_{56}$) is centered at $(3/14,3/14)$ (resp. $(1/2,5/14)$). Then $C_3$ (resp. $M_{56}$) encloses exactly 60 (resp. 56) interior lattice points. Note that $M_{56}$ is the largest 56-circle while $M_{60}$ has radius $\sqrt{82}/2$ and is pictured as the first circle in Figure~\ref{fig:MACsymAbundant}.

\section{Strong MAC numbers.}
If $n$ is a MAC number but $n+1$ is not then we say $n$ is a \emph{strong MAC number}. The number of consecutive non-MAC numbers following a strong MAC $n$, denoted by $I_n$, is called its \emph{impacting index}. There are some quite interesting conjectural statements contained in \cite[Conjecture 3]{Zhao2025Aug} concerning the strong MAC numbers. In this section, we will disprove one of them and partially strengthen and confirm some of the others.

\subsection{Impacting indices and number of decorations.}
Frequently, the number of decorations is large if $I_n$ is large. Furthermore, the decorations of the largest circles of the strong MAC numbers often show a lot of apparent symmetries. The following conjecture, a stronger version of \cite[Conjecture 3, (9) and (13)]{Zhao2025Aug}, is supported by the evidence in the range $0\le n\le 2700$. For any MAC number $n$, denote by $M_n$ a largest $n$-circle with its center in the fundamental triangle $\Delta$ and call it an \emph{MAC circle} of $n$.

\begin{conjecture}\label{conj:DecorStrongMAC}
Let $n$ be any strong MAC number. Then $M_n$ has at least $2I_n+4$ decorations.
\end{conjecture}

We now show a result which is weaker than Conjecture \ref{conj:DecorStrongMAC} but strong enough to imply \cite[Conjecture 3, (9)]{Zhao2025Aug} if $I_n\ge 2$, which says that $M_n$ has at least six decorations if $n$ is a strong MAC number $n$.

\begin{theorem}\label{thm:DecorStrongMAC}
For each strong MAC number $n$ with impacting index $I_n$, its MAC circle $M_n$ has at least $2I_n+3$ decorations.
\end{theorem}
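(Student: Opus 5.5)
The plan is to use the perturbation idea of Section~\ref{sec:Proof56} in a more systematic form. Let $n$ be strong MAC with impacting index $I=I_n$. Then $n+1,\dots,n+I$ are non-MAC. By Lemma~\ref{lem:main}(b) their LUBORs all equal $R_n$, the radius of $M_n$. Fix $j$ with $1\le j\le I$. As recalled after Lemma~\ref{lem:main}, fact (i) gives a sequence of $(n+j)$-circles, all enclosing the same set $S_j$ of $n+j$ lattice points, whose radii increase to $R_{n+j}=R_n$. Their limit circle $\mathcal O_j$ is a lattice circle of radius $R_n$ (fact (ii)). By fact (iii), $\mathcal O_j$ has no interior lattice points outside $S_j$, and some points of $S_j$ lie on its boundary.

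The first step is to identify $\mathcal O_j$ with $M_n$ (up to lattice equivalence, using Lemma~\ref{lem:inDelta}). The interior of $\mathcal O_j$ contains at most $n+j$ points and is a subset of $S_j$, so $\mathcal O_j$ is an $m$-circle with $m\le n+j$ and radius $R_n$. I will argue that $m=n$. If $m>n$, then $m$ lies among the non-MAC numbers $n+1,\dots,n+j$. A lattice $m$-circle of radius $R_n=R_m$ would then make $m$ MAC, by the criterion $\rho_m=R_m$ in the proof of Theorem~\ref{thm:infinityMAC}, which is a contradiction. If $m<n$, then Lemma~\ref{lem:main}(a) gives $R_m\le R_n$, and monotonicity combined with the existence of the $m$-circle $\mathcal O_j$ of radius $R_n$ forces $R_m=R_n$. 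I expect this to need a further perturbation remark to rule out. The cleanest plan is to show that if $R_m=R_n$ with $m<n$, the largest circles coincide and the interior count is forced to be $n$. Granting $m=n$, $\mathcal O_j$ is a largest $n$-circle. I will identify it with $M_n$, noting that by Section~\ref{sec:latCircle} uniqueness is not guaranteed in general. So I will either work with whichever MAC circle arises, or prove the bound for every MAC circle by running the argument for each.

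The second step is counting. Approximating circles that converge to $M_n$ from inside-outside perturbations capture exactly $j$ boundary points, namely $S_j\setminus \mathrm{int}(M_n)$. The set of decorations $D$ of $M_n$ therefore admits, for each $j\le I$, a subset $T_j$ of size $j$ that can be "pushed inside" while the complement $D\setminus T_j$ is pushed outside. A translation-plus-shrink perturbation takes a direction $v$ and puts inside exactly those decorations $P$ with $\langle P-O,v\rangle>$ a threshold. So each $T_j$ is an arc-cap of $D$ cut off by a line; this is a small-$\varepsilon$ analysis as in Section~\ref{sec:Proof56}. The key constraint is that the decorations cannot be split so that all but a few lie on one side; the obstruction is the lattice circle being "largest". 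Crucially, a non-MAC $n+j$ also means there is no way to capture $j$ points while enlarging. Because $M_n$ is largest, no open half-plane through the center can contain all the points that move inside, which would allow growth. Hence the captured set $T_j$, which lies in an open cap strictly smaller than a semicircle's complement, must leave at least $j+3$ decorations among the rest. More precisely, any cap containing $T_j$ with $|T_j|=j$ is disjoint from a closed half-circle containing the remaining points, which must themselves prevent enlargement. This needs at least $3$ points not in any open semicircle, together with the $j$ cap points.

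The main obstacle is sharpening this to $2I+3$ rather than $I+3$. My approach is to use both "sides": capture $I$ points by a cap in direction $v$. The points that block enlargement must then be spread, and I will show that the opposite cap (direction $-v$) must also contain at least $I$ points. If it contained fewer, one could capture fewer than $I$ in the opposite direction while pushing the circle larger. That would yield an $(n+j)$-circle of radius $\ge R_n$, that is, a lattice $(n+j)$-circle equal to $R_{n+j}$, contradicting non-MAC. Adding at least three non-cap decorations that are needed for the lattice-circle/largest condition (by Lemma~\ref{lem:latticeCircle}) gives $I+I+3$. Making the opposite-cap argument rigorous, and ensuring the three extra points are disjoint from both caps, is the delicate part.
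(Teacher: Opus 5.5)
There is a genuine gap: the proposal never reaches a contradiction, and its route is both unnecessary and partly blocked.

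\textbf{Step 1.} Identifying the limit circles $\mathcal O_j$ with $M_n$ requires excluding the case where $\mathcal O_j$ is an $m$-circle with $m<n$ and radius $R_n$. If $m$ is non-MAC this is impossible, so $m$ would be MAC with $R_m=R_n$. Excluding that is a case of the still-open conjecture that MAC radii strictly increase; only non-decrease is known, and Section~\ref{sec:latCircle} exhibits lattice circles of equal radius enclosing different numbers of points. Even if Step 1 succeeded, it would only describe whichever largest $n$-circle happens to arise as a limit, not a prescribed $M_n$, which need not be unique (cf.\ $n=322,941$).

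\textbf{Steps 2--3.} These use translation-plus-shrink perturbations. A shrunken circle has radius $<R_n$, which contradicts nothing. The claim that the opposite cap holds at least $I$ points is left unproved. The extra ``$+3$'' is not a valid count: Lemma~\ref{lem:latticeCircle} gives three decorations in total, not three disjoint from both caps.

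\textbf{The missing idea.} Perturb $M_n$ itself by a \emph{pure translation} of its center, keeping the radius equal to $R_n$.
\begin{itemize}
\item Move the center a tiny distance $\delta$ in a unit direction $v$. This puts strictly inside exactly the decorations in the open semicircle facing $v$. Every other decoration, including any on the diameter perpendicular to $v$, goes strictly outside, since its squared distance becomes $R_n^2-2\delta\langle P-O,v\rangle+\delta^2>R_n^2$.
\item By discreteness, no lattice point then lies on the new circle. So the radius can be increased slightly without changing the interior count $n+k$, where $k$ is the number of decorations in that open semicircle.
\item If $k=0$, this contradicts the maximality of $M_n$. If $1\le k\le I_n$, it contradicts the fact that $n+k$ is non-MAC with non-attained LUBOR $R_{n+k}=R_n$ (Lemma~\ref{lem:main}(b)).
\end{itemize}
Hence every open semicircle contains at least $I_n+1$ decorations. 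Taking the diameter through one decoration gives at least $1+2(I_n+1)=2I_n+3$ decorations. This is the paper's argument, phrased there as a contrapositive with pigeonhole on at most $2I_n+2$ decorations, and it needs none of the limit-circle machinery.
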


\begin{proof}
Suppose there are at most $2I_n+2$ decorations on $M_n$. Draw the diameter $\ell$ of $M_n$ such that it passes through one of the decorations. Then, on one side of $\ell$ there are at most $I_n$ decorations not lying on $\ell$. By moving the center of the circle towards these points along the direction perpendicular to $\ell$ with an infinitesimal distance we see that there is a circle of radius $R_n$ that encloses exactly $n+I_n$ points. By discreteness of lattice points, we can further increase the radius of this circle slightly while keeping the number of interior points unchanged. This contradicts to the condition that $n+I_n$ is non-MAC and that $R_n$ is its LUBOR. This contradiction implies that there are at least $2I_n+3$ decorations on $M_n$.
\end{proof}

\begin{figure}[h]
\centering
  \includegraphics[scale=0.6]{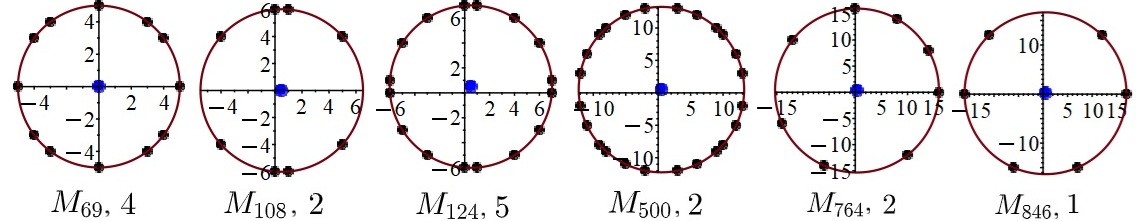}
\caption{The first five decorated MAC circles shown here (with their impacting indices) correct the errors on page 12 of \cite{Zhao2025Aug}. The rightmost decorated circle $M_{846}$ is different from that in \cite{Zhao2025Aug} because we have used a different fundamental triangle $\Delta$ in this paper.}
\label{fig:correction}
\end{figure}

We notice from Section~\ref{sec:intro} that $4$ is strong MAC while its MAC circle $M_4$ in Figure \ref{fig:R0-4} having eight decorations with $I_4=2$. Hence, the lower bound in Conjecture \ref{conj:DecorStrongMAC} cannot be improved in general. The decorated MAC circles in Figure~\ref{fig:correction} provide some more examples.

We also find the following conjecture holds for all strong MAC numbers $n\le 2700$, which would imply Conjecture~\ref{conj:DecorStrongMAC} by Theorem~\ref{thm:DecorStrongMAC}.

\begin{conjecture}\label{conj:DecorStrongMACEven}
For any strong MAC number $n$, the number of decorations on its MAC circle $M_n$ must be even.
\end{conjecture}

Note that our numerical computation ends at $n=2700$ and we found 2700 is MAC but not strong by Theorem~\ref{thm:DecorStrongMAC} since $M_{2700}$ has only 4 decorations. The following table shows the distribution of the impacting indices. We define the impacting index $I_n=-1$ if $n$ is non-MAC and $I_n=0$ if $n$ is MAC but not strong.
\begin{table}[h]
\caption{Distribution of impacting indices $I_n$, for all $0\le n\le 2700$.}
\centering\begin{tabular}{|c|c|c|c|c|c|c|c|c|c|c|c|c|c|}
\hline
$I$ &   $-1$ & 0 & 1 & 2 & 3 & 4 &5 &6 &7 &8 &9 &10&$\ge1$\\
\hline
$\sharp\{n:I_n=I\}$ & 433& 2100& 61& 60& 5& 15& 8& 15& 0& 2& 0& 1 &167\\
\hline
\end{tabular}
\label{tbl:Distribution}
\end{table}
It is conjectured in \cite[Conjecture 3(5)]{Zhao2025Aug} that there are arbitrarily long sequence of non-MAC numbers, namely, $I_n$ is unbounded. We suspect a more stronger statement holds.

\begin{conjecture} \label{conj:infiniteI_n}
The set of impacting indices $\{I_n: n\in{\mathbb N}, \ n \ \text{is strong MAC}\}={\mathbb N}$.
\end{conjecture}

\subsection{Symmetry of decorations.}
For strong MAC number $n$, the decorations on $M_n$ very frequently have at least one mirror symmetry. Or, equivalently, the center of $M_n$ almost always lies on the boundary of the fundamental triangle $\Delta$.

\begin{figure}[h]
\centering
  \includegraphics[scale=0.6]{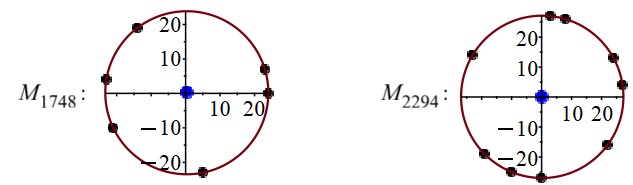}
\caption{Both 1748 and 2294 are strong MAC numbers whose decorated MAC circles do not have any mirror symmetry. However, their impacting indices are small: $I_{1748}=1$ and $I_{2294}=2$.}
\label{fig:2Exception}
\end{figure}
However, we have found two exceptions when $n\le 2700$ shown in Figure~\ref{fig:2Exception}. This disproves Conjecture 3(13) in \cite{Zhao2025Aug}, seemingly to show that 13 is really an unlucky number:)

We remark in passing that sometimes a MAC number is not strong but its largest decorated circle may still have a lot of decorations, as the cases shown in Figure~\ref{fig:MACsymAbundant}.

\begin{figure}[h]
\centering
  \includegraphics[scale=0.5]{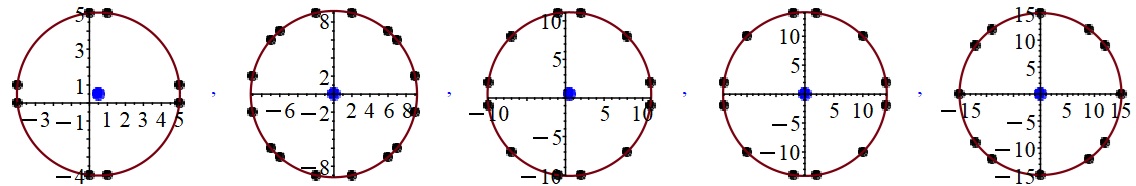}
\caption{The MAC numbers 60, 261, 348, 621, 697 are not strong, even though their decorated MAC circles shown here have more than one symmetry.}
\label{fig:MACsymAbundant}
\end{figure}

Notice that the decorations in Figure~\ref{fig:MACsymAbundant} are equipped with three kinds of mirror symmetries: horizontal, vertical, and slant, which is equivalent to the fact that the center of all $M_n$ in Figure~\ref{fig:MACsymAbundant} is at $(1/2, 1/2)$. The general relation between  the decoration symmetries of $M_n$ and the center of $M_n$ is given in Table~\ref{tbl:symmetryTypes}.

\begin{table}[h]
\caption{Relations between decoration symmetries of $M_n$ and the location of its center, and distribution of $M_n$-centers ($n\le 2700$).}
\begin{center}
\begin{tabular}{|c|c|c|}
\hline
Center Location of $M_n$ & Mirror Symmetries of $M_n$  & $\sharp\left\{n \left| {\textstyle M_n\text{-center=}\atop \textstyle \text{1st column} }\right.\right\}$ \\
\hline
$(0,0)$ or $(1/2,1/2)$ & horizontal, vertical and slant (45$^\circ$) &189 \\
\hline
$(1/2,0)$ & horizontal and vertical  &51\\
\hline
$(1/2,y),\ 0<y<1/2$ &  vertical &255\\
\hline
$(x,0),\ 0<x<1/2$ & horizontal  &337\\
\hline
$(x,x),\ 0<x<1/2$ & slant (45$^\circ$)   &477\\
\hline
$(x,y),\ 0<x\ne y<1/2$ & no symmetry & 1441\\
\hline
\end{tabular}
\end{center}
\label{tbl:symmetryTypes}
\end{table}

After computing the centers of the MAC circles $M_n$ for all MAC number $n\le 2700$, we plot their distribution on the fundamental triangle $\Delta$ in Figure~\ref{fig:centerPlot}.

\begin{figure}[h]
\centering
  \includegraphics[scale=0.25]{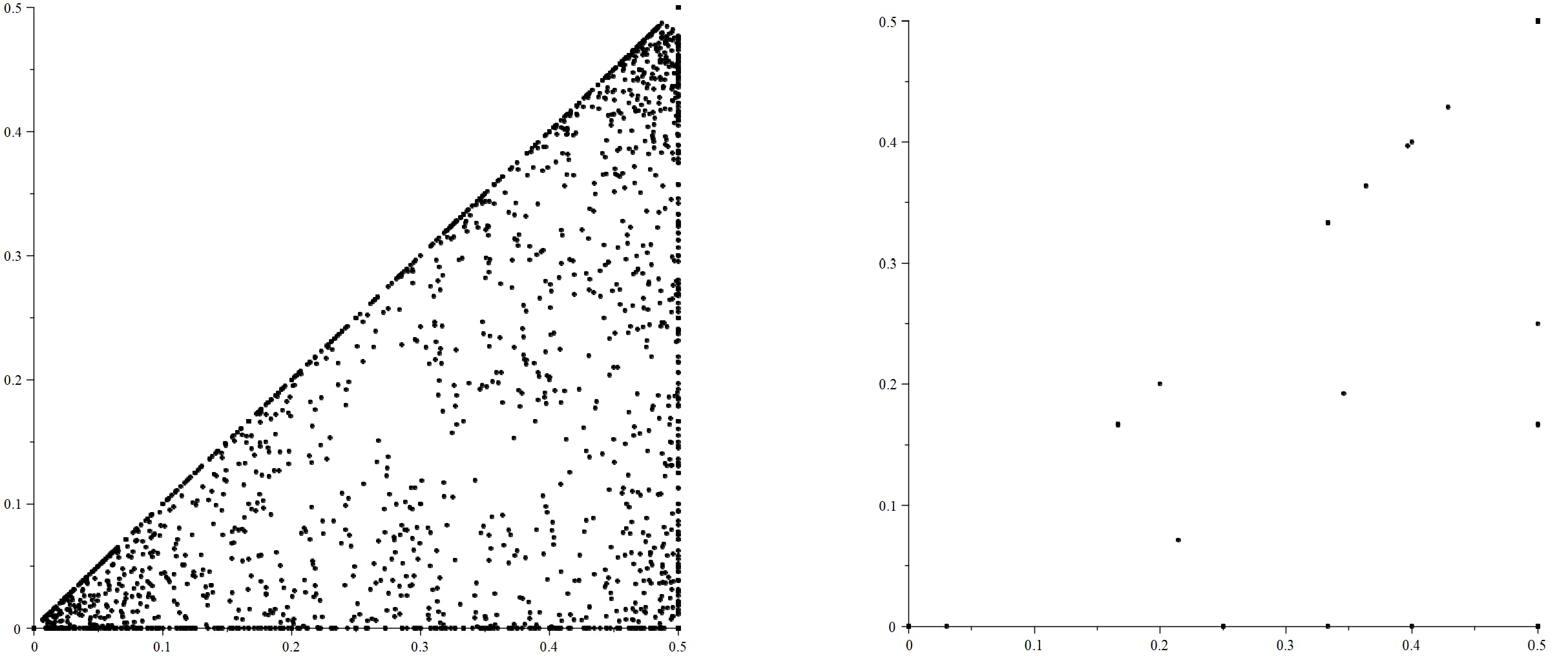}
\caption{Left: The picture shows the distribution of MAC circle centers in the fundamental triangle $\Delta$ for $0\le n\le 2700$. Some points have very high multiplicities: $(0,0)$ repeats 94 times, $(1/2,1/2)$ 95 times, and $(1/2,0)$ 51 times. 
Right: The picture shows the distribution of MAC circle centers in the fundamental triangle $\Delta$ for \textbf{strong} MAC $n$, $0\le n\le 2700$, with $(0,0)$ repeating 52 times, $(1/2,1/2)$ 69 times, and $(1/2,0)$ 21 times. The two points not on the boundary corresponds to two circles in Figure~\ref{fig:2Exception}.}
\label{fig:centerPlot}
\end{figure}

The high multiplicities at $(0,0)$, $(1/2,1/2)$ and $(1/2,0)$ reflect the fact that the mirror symmetries tend to make the corresponding circle $M_n$ rigid which can be quantified by $I_n$. In fact, about half of these multiplicities are contributed by the strong MAC numbers. Moreover, this is also related to the subtle but apparent empty area around the three points. It seems that these points act as attractors to nearby potential points. Another fact which cannot be seen from the above picture is that if two MAC numbers are close then their center locations are often close.

\subsection{Infinitude of strong MAC numbers and Pythagorean triples.}
We observe that Theorem \ref{thm:infinityMAC} implies that Conjecture~\ref{conj:nonMAC} concerning infinitude of non-MAC numbers is equivalent to the following statement.

\begin{conjecture}\label{conj:StrongMAC}
There are infinitely many strong MAC numbers.
\end{conjecture}

From numerical evidence, we further propose a much stronger statement as follows. Define
\begin{equation*}
{\mathcal P}:=\left\{
R\in {\mathbb N} \left |
\aligned
& \text{every prime factor $p$ of $R$ is 2} \\
& \text{ or satisfies } p\equiv 1 \pmod{4}
\endaligned \right.
\right\}.
\end{equation*}

\begin{conjecture}\label{conj:PythagorasStrongMAC}
Suppose the circle centered at the origin with radius $R$ is an $n_R$-circle. Then the subset of ${\mathcal P}$
\begin{equation*}
{\mathcal P}_{\text{strong}}:=\Big\{R\in {\mathcal P} \mid \text{$n_R$ is a strong MAC number}  \Big\}
\end{equation*}
has positive density in ${\mathcal P}$.
\end{conjecture}

Recall that $(a,b,c)\in{\mathbb N}^3$ is called a primitive Pythagorean triple if $\gcd(a,b,c)=1$ and $a^2+b^2=c^2$.
It is well known (cf. \cite{Silverman,Zagier1990}) that such a triple $(a,b,c)$ exists if and only if $c=R=p_1^{e_1}\cdots p_k^{e_k}$
where every $p_j$ is a prime satisfying $p_j\in {\mathcal P}$ for all $j=1,\dots,k$. It is not hard to see that there are exactly
$(2e_1+1)\cdots(2e_k+1)-1$ positive integer solutions $(x,y)$ using the well-known idea that if $(a_j,b_j,c_j)$ ($j=1,2$, $c_1\ne c_2$) are two primitive Pythagorean triples then $(|a_1b_1-a_2b_2|,a_1b_2+a_2b_1,c_1c_2)$ and  $(|a_1b_2-a_2b_1|,a_1b_1+a_2b_2,c_1c_2)$ together with their $x$- and $y$- coordinates swapped are four primitive Pythagorean triples. Hence, there are $4(2e_1+1)\cdots(2e_k+1)$ lattice points on the circle $M_n$ centered at $(0,0)$ of radius $R$, with the extra four points given by $(0,\pm R)$ and $(\pm R,0)$ not produced from the Pythagorean triples. Moreover, the decorated circle $M_n$ has all the three symmetries in Table~\ref{tbl:PythaRadius}. For all $n\le 2700$, the following eight strong MAC numbers are all produced in this way:
\begin{table}[h]
\caption{Strong MAC numbers produced by Pythagorean triples. Note that $10=2\cdot 5$, $20=4\cdot 5$, and $26=2\cdot 13$.}
\centering\begin{tabular}{|c|c|c|c|c|c|c|c|c|}
\hline
$R$ &   5 & 10 & 13 & 17 & 20   & 25   & 26 &29\\
\hline
$n$ & 69 & 305& 517 & 889 &1245 &1941 &2109 &2617 \\
\hline
$\sharp\{$pts on $M_n\}$ &   12&   12&   12&   12&   12&   20 &   12 &   12 \\
\hline
\end{tabular}
\label{tbl:PythaRadius}
\end{table}

However, we find that $697=n_{15}$ is MAC but not strong even though its MAC circle is centered at (0,0) and has radius $15=3\cdot 5$. See the last decorated circle in Figure~\ref{fig:MACsymAbundant}. We wonder if Conjecture~\ref{conj:PythagorasStrongMAC}  can actually be strengthened to 100\% instead of just positive density, or even further, ${\mathcal P}_{\text{strong}}={\mathcal P}$.

\begin{problem}
Suppose $R\in{\mathcal P}$ and the circle centered at the origin with radius $R$ is an $n_R$-circle. Is $n_R$ always a strong MAC number?
\end{problem}

\subsection{Distribution of strong MAC numbers.}
It is conjectured in \cite[Conjecture 3, (8)]{Zhao2025Aug} that the density of strong MAC numbers in the set of MAC numbers is positive. This is corroborated by the distribution data in the range $0\le n\le 2700$ in Table~\ref{tbl:strongMACdistrution}. Hence, our guess is that the density of strong MAC numbers in ${\mathbb N}_0$ should be positive at around 6\%. We point out that $M_{2700}$ has only four decorations and therefore it cannot be strong by Theorem~\ref{thm:DecorStrongMAC}.

\begin{table}[h]
\caption{Strong MAC number distribution for $0\le n\le 2700$.}
\centering\begin{tabular}{|c|c|c|c|c|c|c|c|c|c|}
\hline
$\lfloor n/300 \rfloor$  &   0 &  1  & 2  & 3 & 4 & 5 & 6 & 7 & 8 \\
\hline
$\sharp\{n|$  $n$ is strong MAC$\}$ & 26& 21& 20& 21& 11& 17& 16& 18& 17  \\
\hline
\end{tabular}
\label{tbl:strongMACdistrution}
\end{table}

\section{Non-MAC numbers.}\label{sec:non-MAC}
From the data collected for all $n\le 2700$ we have found that the non-MAC numbers are scattered pretty evenly in the set of positive integers.
It is conjectured in \cite[Conjecture 3, (5) and (7)]{Zhao2025Aug} that there are arbitrarily long sequence of non-MAC numbers and the density of non-MAC numbers in ${\mathbb N}_0$ is positive (perhaps around 15\%). The first part of this conjecture would follow from Conjecture~\ref{conj:infiniteI_n} and the second part is corroborated by the distribution data in Table~\ref{tbl:nonMACdistrution}.

\begin{table}[h]
\caption{Non-MAC number distribution for $n\le 2700$.}
\centering\begin{tabular}{|c|c|c|c|c|c|c|c|c|c|}
\hline
$\lfloor n/300 \rfloor$  &   0 &  1  & 2  & 3 & 4 & 5 & 6 & 7 & 8 \\
\hline
$\sharp\{n|$  $n$ is non-MAC$\}$ & 57& 49& 54& 47& 50& 40& 45& 44& 47  \\
\hline
\end{tabular}
\label{tbl:nonMACdistrution}
\end{table}

An interesting phenomenon associated with a non-MAC number $n$ is that the largest lattice $n$-circle may have a different feature from that of the MAC circles of MAC numbers, which is useful for our search of MAC numbers.

\begin{theorem}\label{thm:nonMACdecor}
If none of the triangles formed by the decorations of any largest lattice $n$-circle is acute, then $n$ must be a non-MAC number.
\end{theorem}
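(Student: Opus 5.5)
We argue by contraposition. Suppose $n$ is MAC; we exhibit a largest lattice $n$-circle with an acute decoration triangle. By Lemma~\ref{lem:latticeCircle}, a largest $n$-circle $M_n$ exists and is a lattice circle. Since it is an $n$-circle, its radius is at most $\rho_n$. By definition of $R_n$ we also have $\rho_n\le R_n$, so $M_n$ is itself a largest lattice $n$-circle. It therefore suffices to show that if none of the triangles formed by decorations of $M_n$ is acute, then $M_n$ can be perturbed into a strictly larger $n$-circle. This contradicts maximality of $R_n$.

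The geometric fact I would use is elementary. Let $D$ be the finite set of decorations on a circle with center $O$. Some triangle with vertices in $D$ is acute, or right with $O$ as midpoint of the hypotenuse, exactly when $O$ lies in the convex hull of $D$. More precisely, $O$ lies in the interior of $\mathrm{conv}(D)$ if and only if some three points of $D$ form an acute triangle. For the forward direction, triangulate $\mathrm{conv}(D)$ by a fan from one vertex; $O$ then lies in the closed interior of some triangle of that fan, and one would handle the boundary case with care. For the converse, a triangle inscribed in the circle is acute iff the center is strictly inside it. So the hypothesis that no decoration triangle is acute puts $O$ outside $\mathrm{int}\,\mathrm{conv}(D)$. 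Hence there is a closed half-plane through $O$ containing all of $D$, meaning all decorations lie on a closed semicircle.

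Now for the perturbation. Choose a unit vector $u$ with $u\cdot (P-O)\le 0$ for all $P\in D$, so that all decorations lie in the half away from $u$. Move the center to $O+tu$ and set the new radius to
\[
r(t)=\min_{P\in D}|P-O-tu|.
\]
We need $r(t)>R_n$ for small $t>0$, with no new lattice points inside.

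If every $P\in D$ satisfies $u\cdot(P-O)<0$ strictly, then $|P-O-tu|^2=R_n^2-2t\,u\cdot(P-O)+t^2>R_n^2$. The remaining lattice points are at positive distance from the circle, so for small $t$ the circle centered at $O+tu$ with radius slightly less than $r(t)$, but still greater than $R_n$, encloses exactly the original $n$ interior points. This contradicts maximality.

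The boundary case, where decorations lie on the diameter perpendicular to $u$, is the main obstacle. There $D$ contains two antipodal points $P,-P$ (relative to $O$), and all remaining decorations lie on one closed side. If the other decorations lie strictly on one side, I would tilt $u$ slightly. When the others are on the open side, this uses a $u'$ with $u'\cdot(P-O)<0$ for all $P\in D$. When $D=\{P,P'\}$ is just a diameter pair, this fails, so instead I would move the center along the diameter direction by $t$ and compensate orthogonally by $s\gg t^2$. The distances to both endpoints then grow at second order, giving a strictly larger empty-of-new-points circle. If decorations exist on both sides strictly, one gets a right triangle with the center on the hypotenuse, which must be excluded. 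I expect the intended reading is that "non-acute" covers this, and the degenerate right-triangle configuration needs the most checking: whether such a configuration can be largest, via the second-order perturbation argument.
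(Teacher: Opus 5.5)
\textbf{There is a genuine gap.} Your refinement ``$O\in\mathrm{int}\,\mathrm{conv}(D)$ if and only if some three decorations form an acute triangle'' is false in the forward direction. It fails exactly in the boundary case you postponed. If $D$ is the vertex set of a rectangle, then $O$ is interior to $\mathrm{conv}(D)$, but $O$ lies on a diagonal, and every triangle on three of the vertices is right. So ``no acute triangle'' does not put $D$ on a closed semicircle, and your perturbation has nothing to start from. With two non-parallel diameters among the decorations, every small translation of the center brings it strictly closer to some decoration, so no second-order argument can rescue this case.

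In fact the statement is false if ``acute'' is read strictly. The numbers $0$ and $1$ are MAC. Their largest lattice circles are the circle of radius $\sqrt2/2$ through $(0,0),(1,0),(0,1),(1,1)$, and the unit circle about a lattice point. Up to equivalence these are the only lattice circles of those radii, and both have square decorations, all of whose triangles are right. So your closing hope of excluding the right-triangle configuration cannot succeed.

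\textbf{The fix.} The hypothesis has to be read the way the paper's proof actually uses it: no decoration triangle is acute \emph{or right}. The paper's largest-empty-arc/antipode argument ends by producing ``an acute or right triangle'' and calls that the contradiction. Its later remark about searching non-obtuse triangles says the same.

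Under that reading your route works, and it is a clean alternative to the paper's arc argument. Your first sentence is correct: $O$ lies in the \emph{closed} hull $\mathrm{conv}(D)$ exactly when some three points of $D$ form an acute or right triangle (Carath\'eodory). Hence $O\notin\mathrm{conv}(D)$. Strict separation of $O$ from the compact convex set $\mathrm{conv}(D)$ then gives $u$ with $u\cdot(P-O)<0$ for all $P\in D$. Your first perturbation case finishes the proof from there.

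\textbf{A side remark on your boundary case.} It was never an obstacle. If you only have $u\cdot(P-O)\le 0$ for all $P\in D$, your own formula already gives $|P-O-tu|^2\ge R_n^2+t^2>R_n^2$. So a closed semicircle suffices, and no tilting is needed. The ``$s\gg t^2$'' compensation is unnecessary, and it is also mis-scaled: a shift $t$ along the diameter requires $s^2>2R_n t$.
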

\begin{proof}
Let $M_n$ be a largest lattice $n$-circle. Suppose none of the triangles formed by any three decorations of $M_n$ is acute. We claim that if arc $\alpha$ on $M_n$ is the largest arc without other decorations in the interior then its angle measure is at least $180^\circ$. Suppose this not true, namely, the angle measure $m(\widearc{AB})\le 180^\circ$ where $A$ and $B$ are two decorations on $M_n$. Let $A'$ and $B'$ be the antipode of $A$ and $B$, respectively.
\begin{figure}[h]
\centering
  \includegraphics[scale=0.5]{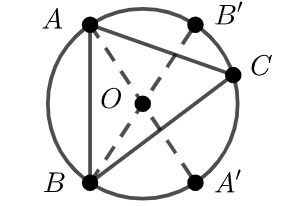}
\caption{Triangle $\triangle ABC$ is an acute or right triangle if $\widearc{AB}$ is a minor arc, where $O$ is the circle center.}
\label{fig:minorArc}
\end{figure}
Then there must be a decoration $C$ (could be equal to $A'$ or $B'$) on the minor arc $\widearc{A'B'}$ by maximality of $\widearc{AB}$. See Figure~\ref{fig:minorArc}. By the Inscribed Angle Theorem, $\angle C=m(\widearc{AB})/2\le 90^\circ$, $\angle A=m(\widearc{BA'C})/2\le m(\widearc{BA'B'})/2= 90^\circ$, and $\angle B=m(\widearc{AB'C})/2\le m(\widearc{AB'A'})/2= 90^\circ$. Hence, $\triangle ABC$ is an acute or right triangle contradicting to the assumption of the theorem.

Knowing that the largest arc $\alpha=\widearc{AB}$ without other decorations in the interior has angle measure at least $180^\circ$, we can push the circle by an infinitesimal amount towards this arc while keeping the two decorations $A$ and $B$ on the circle. Then we obtain a circle with the same number of interior points but with larger radius. This is absurd if $n$ is MAC since $M_n$ is a largest $n$-circle by Lemma~\ref{lem:latticeCircle}. Therefore, $n$ must be a non-MAC number.
\end{proof}

Theorem~\ref{thm:nonMACdecor} can expedite our computer search of the MAC numbers because we only need to search through all non-obtuse triangles centered in the first unit square with one of its vertices $(x,y)$ satisfying $x\ge 0$ and $y\ge x$. Further, we can use Lemma~\ref{lem:Rbound} to significantly reduce the number of vertices we need to search through.
To illustrate this idea, we can take a look at the case $n=336$. Our computation shows that $R_{332}=\sqrt{442}/2$, $\rho_{333},\dots,\rho_{338}<\sqrt{442}/2$. However, the above more efficient search only shows that $\rho_{336}'= 5\sqrt{685610}/398<R_{332}$ corresponding to the acute triangle with vertices at $(8, -3), (7,5), (8, 3)$. In fact, the largest lattice 336-circle centered on $\Delta$ has radius $\rho_{336}=5\sqrt{157}/6<R_{332}$ with decorations given in Figure~\ref{fig:336Decor}. We see that all the decorations stay on the bottom semicircle. Then Theorem~\ref{thm:nonMACdecor} implies that 336 in a non-MAC number.

\begin{figure}[h]
\centering
  \includegraphics[scale=0.5]{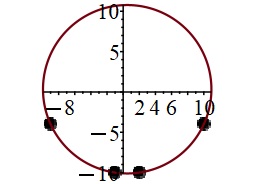}
\caption{This largest lattice 336-circle implies that 336 must be a non-MAC number.}
\label{fig:336Decor}
\end{figure}

\section{Higher dimensional analog.}\label{sec:higherDim}
In this last section, we explore briefly the higher dimensional extensions of the main results of the paper. To begin with, we fix dimension $d\ge 2$ and say a non-negative integer $n$ is \emph{$d$-maximally circlable} ($d$-MAC) if there is a largest $(d-1)$-dimensional sphere enclosing exactly $n$ lattice points in its interior in the $d$-dimensional coordinate space ${\mathbb R}^d$. Otherwise, we say $n$ is non-$d$-MAC. For example, maximally $2$-circlable means the same as maximally circlable defined in the main body of this paper. An \emph{$n$-enclosing sphere in} ${\mathbb R}^d$ is a $(d-1)$-dimensional sphere that enclosing exactly $n$ lattice points in its interior. Denote by $R_n^{(d)}$ the largest radius of all $n$-enclosing spheres in ${\mathbb R}^d$ if such a radius exists and call it the \emph{$d$-MAC radius of $n$}; otherwise, denote by $R_n^{(d)}$ the lowest upper bound of radii (LUBOR as the acronym) of all $n$-enclosing $d$-dimensional spheres and call it the \emph{$d$-LUBOR of $n$}.

We call a sphere in ${\mathbb R}^d$ a lattice sphere if there are at least $d+1$ lattice points on its boundary.
For any $d$-MAC number $n$, let $M_n^{(d)}$ be a $n$-enclosing sphere in ${\mathbb R}^d$ whose radius is $R_n^{(d)}$, which we call a \emph{$d$-MAC sphere of $n$}. We call the lattice points on the boundary of $M_n^{(d)}$ \emph{decorations}. The next result, which generalizes Lemma~\ref{lem:latticeCircle}, can be proved readily by the perturbation method.

\begin{lemma}\label{lem:Lattice-d-MAC}
If $n$ is a $d$-MAC number, then $M_n^{(d)}$ must be a lattice sphere. Namely, $M_n^{(d)}$ has at least $d+1$ decorations.
\end{lemma}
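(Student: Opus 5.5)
We argue by contradiction with a perturbation argument, following the proof of Lemma~\ref{lem:latticeCircle} in one more dimension. Let $S$ be an $n$-enclosing sphere with center $c\in\mathbb R^d$ and radius $R=R_n^{(d)}$. Let $D=\{p_1,\dots,p_m\}$ be its decorations, and suppose $m\le d$. By discreteness there is $\delta>0$ with the following property: every lattice point strictly inside $S$ lies at distance less than $R-\delta$ from $c$, and every lattice point outside the closed ball lies at distance greater than $R+\delta$ from $c$. Consequently, any sphere whose center moves by less than $\delta/2$ and whose radius changes by less than $\delta/2$ still contains all original interior points and excludes all original exterior points. The only points whose status can change are therefore the decorations. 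We shall construct such a small perturbation that keeps every decoration outside or on the boundary but strictly enlarges the radius. This would contradict maximality, so $m\ge d+1$.

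\textbf{Case $m=0$.} If there are no decorations, we simply enlarge the radius by $\delta/2$. The interior count stays $n$, which is a contradiction.

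\textbf{Case $1\le m\le d$.} Consider the vectors $v_j=p_j-c$, for $j=1,\dots,m$, each of length $R$. We look for a direction $u$ with $\langle u,v_j\rangle<0$ for all $j$, so that moving the center along $u$ moves it strictly away from every decoration. Then $|p_j-(c+tu)|^2=R^2-2t\langle u,v_j\rangle+t^2>R^2$ for small $t>0$. For such $t$ we may take the new radius to be $\min_j |p_j-(c+tu)|>R$. Each decoration then lies on or outside the new sphere, while the interior still contains exactly the original $n$ points, and the radius is larger than $R$. This contradicts the maximality of $R=R_n^{(d)}$.

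\textbf{Main obstacle.} The key step is showing that such a $u$ exists whenever $m\le d$. By Gordan's alternative, no $u$ with $\langle u,v_j\rangle<0$ for all $j$ exists exactly when $0$ lies in the convex hull of $v_1,\dots,v_m$, that is, when $c\in\operatorname{conv}(D)$.

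\emph{Subcase $D$ affinely independent.} If $c\in\operatorname{conv}(D)$, then $c$ lies in the affine span of $D$, and this span has dimension at most $m-1\le d-1$. In that situation we choose instead a direction $u\ne 0$ orthogonal to the affine span of $D$; one exists since that span has dimension less than $d$. Moving the center to $c+tu$ gives $|p_j-(c+tu)|^2=R^2+t^2>R^2$ for every $j$, because $\langle u,v_j\rangle=0$. The same construction then yields a strictly larger $n$-enclosing sphere.

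\emph{Subcase $D$ affinely dependent.} The affine span of $D$ then has dimension even smaller than $m-1$, so it is still a proper subspace and the orthogonal direction still exists.

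In every case we obtain a contradiction, so $M_n^{(d)}$ has at least $d+1$ decorations.

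A final check is needed in the case $m=d$ with $c$ outside the affine span of $D$. This case is covered by Gordan's alternative, since $c\notin\operatorname{conv}(D)$ there. Alternatively, we can always use the unified choice: take $u$ with $\langle u,v_j\rangle\le 0$ for all $j$, and $u$ orthogonal to the span whenever $c$ lies in it.
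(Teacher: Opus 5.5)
Your proposal is correct and is essentially the perturbation argument the paper has in mind. The paper only says the lemma ``can be proved readily by the perturbation method'' (as for Lemma~\ref{lem:latticeCircle}), and your discreteness margin $\delta$, combined with a small shift of the center away from or orthogonal to the decorations, is that method written out in $\mathbb R^d$. The Gordan's-alternative case split is not actually needed: your final ``unified choice'' already works in every case, namely a unit normal $u$ to the affine hull of $D$ (which has dimension at most $d-1$), signed so that $\langle u,p_1-c\rangle\le 0$, which gives $|p_j-(c+tu)|^2=R^2-2t\langle u,p_1-c\rangle+t^2>R^2$ for all $j$ at once.
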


By essentially the same argument as in the proof of Lemma \ref{lem:main}, one can also prove the following result.
\begin{lemma}\label{lem:main-d}
Let $d$ be any positive integer with $d\ge 2$. Let $\{R_n^{(d)}: n\ge 0\}$ be the positive sequence of $d$-MAC radii (for $d$-MAC integers $n$)
and $d$-LUBORs (for non-$d$-MAC integers $n$). Then we have:
\begin{enumerate}
  \item[(a)] $\{R_n^{(d)}: n\ge 0\}$ is a non-decreasing sequence.

  \item[(b)] Suppose $k<\ell$ are two consecutive $d$-MAC integers and $n$ is non-$d$-MAC such that $k<n<\ell$. Then $R_n^{(d)}=R_k^{(d)}$, namely, $R_k^{(d)}$ is the $d$-LUBOR for all such non-$d$-MAC $n$ immediately following $k$.

  \item[(c)] Suppose $k<n$, $k$ is a $d$-MAC integer and all the integers between $k$ and $n$ (exclusive) are non-$d$-MAC. Then the radius of the largest lattice $n$-enclosing sphere is \textbf{strictly} smaller than $R_k^{(d)}$ if and only if $n$ is non-$d$-MAC.
\end{enumerate}
\end{lemma}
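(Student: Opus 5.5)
We follow the two-dimensional argument behind Lemma~\ref{lem:main} essentially verbatim, replacing circles by $(d-1)$-spheres and Lemma~\ref{lem:latticeCircle} by Lemma~\ref{lem:Lattice-d-MAC}. The basic tool is perturbation of a sphere $S(c,r)$ with center $c$ and radius $r$. Lattice points are discrete, so only finitely many lie in any bounded region. Consequently, for a sphere with finitely many decorations, a sufficiently small change of $(c,r)$ can only alter the status of the decorations themselves; points strictly inside or strictly outside keep their status. We also use the translation invariance of the lattice $\mathbb Z^d$, which lets us normalize centers to lie in a fixed compact fundamental region such as $[0,1/2]^d$.

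For part (a), it suffices to show that for any $n$-enclosing sphere of radius $r$ and any $\varepsilon>0$, there is an $(n+1)$-enclosing sphere of radius $>r-\varepsilon$. Taking suprema then gives $R_{n+1}^{(d)}\ge R_n^{(d)}$. First enlarge the $n$-sphere slightly while keeping it $n$-enclosing, until its boundary meets a lattice point $P$; if it never does, we may take $r$ larger to begin with. Next shrink the radius by $\delta$ and translate the center toward $P$ by slightly more than $\delta$, in a generic direction. This is the analogue of steps (i)--(ii) in Section~\ref{sec:Proof56}. The main care is choosing the translation direction so that exactly one boundary point enters while the other decorations stay outside. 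We do this by picking a direction $v$ and an \emph{extreme} decoration $P$, namely one maximizing $\langle P-c,v\rangle$ among the decorations, with $v$ generic so that the maximizer is unique. Moving the center along $v$ by $t$ and shrinking the radius appropriately then lets $P$ enter first, since the power $|x-c-tv|^2-\rho^2$ changes to first order by $-2t\langle x-c,v\rangle$ plus the radius change. The freedom to choose the radius change between the two largest values of $\langle x-c,v\rangle$ gives exactly one new interior point. This first-order computation is the step I would check most carefully.

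For part (b) we argue by induction on $j=n-k$. Suppose $n$ is non-$d$-MAC. Take $n$-enclosing spheres with radii tending to $R_n^{(d)}$, which is finite by a volume bound in the spirit of Lemma~\ref{lem:Rbound}: a large ball contains many lattice points. Normalize their centers to the compact fundamental region and pass to a convergent subsequence. Since only finitely many lattice subsets are possible, we may also assume every sphere in the sequence encloses the same set $T$ with $|T|=n$. The limit sphere $\mathcal O$ has radius $R_n^{(d)}$, and its interior misses every lattice point outside $T$. Because the supremum is not attained, $\mathcal O$ must have lost some $T$-points to its boundary, so it encloses some $m<n$ points with $m\ge k$ by (a) and the induction hypothesis. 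A perturbation argument using (a) shows $\mathcal O$ is maximal among $m$-spheres, giving $R_n^{(d)}\le R_m^{(d)}=R_k^{(d)}$. Combined with (a), this yields equality.

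For part (c), apply Lemma~\ref{lem:Lattice-d-MAC}. If $n$ is $d$-MAC, its MAC sphere is a lattice sphere of radius $R_n^{(d)}\ge R_k^{(d)}$ by (a), so the largest lattice $n$-sphere is not strictly smaller than $R_k^{(d)}$. Conversely, suppose the largest lattice $n$-enclosing sphere has radius $\ge R_k^{(d)}$. If $n$ were non-$d$-MAC, then (b) gives $R_n^{(d)}=R_k^{(d)}$, so this lattice sphere attains the supremum $R_n^{(d)}$, contradicting non-maximality. Finiteness of lattice spheres with bounded radius and centers in the fundamental region guarantees that the largest lattice sphere exists. The main obstacle throughout is the rigorous first-order perturbation in (a) and in the limit step of (b), which must be done with general position arguments in $\mathbb R^d$.
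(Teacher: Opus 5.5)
Your route is the paper's: perturbation for (a), a limit sphere $\mathcal O$ enclosing a fixed set $T$ for (b), and (c) from (a), (b) and Lemma~\ref{lem:Lattice-d-MAC}. Your computation in (a) is fine; it is exact on decorations, since the $t^2$ term is common to all of them. Two steps, however, are wrong as written.

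In (b), ``$m\ge k$ by (a) and the induction hypothesis'' does not follow. What you actually have is $R_m^{(d)}\ge R_n^{(d)}\ge R_k^{(d)}$. Turning that into $m\ge k$ would need strict monotonicity of the MAC radii, which is only conjectured. Moreover, (a) gives $R_m^{(d)}\le R_n^{(d)}$, so $\mathcal O$ is already a largest $m$-enclosing sphere and $m$ is $d$-MAC; your claim would then force $m=k$. The fix is to use this fact the other way round. Since $m$ is $d$-MAC, $m<n$, and every integer in $(k,n)$ is non-$d$-MAC, we get $m\le k$. Then (a) gives
\[
R_n^{(d)}\le R_m^{(d)}\le R_k^{(d)}\le R_n^{(d)}.
\]
This proves (b) with no induction and no extra perturbation. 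It uses only that the integers in $(k,n]$ are non-$d$-MAC, which is exactly what is available when you invoke (b) inside (c).

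In (c), your claim that only finitely many lattice spheres have bounded radius and center in a compact region is false for $d\ge 3$ under the paper's definition (at least $d+1$ decorations). The reason is that $d+1$ cospherical lattice points need not be affinely independent. For example, for $d=3$ the sphere centered at $(1/2,1/2,h)$ of radius $\sqrt{1/2+h^2}$ passes through $(0,0,0),(1,0,0),(0,1,0),(1,1,0)$ for every $h$.

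Finiteness does hold for spheres whose decorations affinely span $\mathbb{R}^d$, since each is the circumsphere of a lattice simplex. That stronger property is what the perturbation behind Lemma~\ref{lem:Lattice-d-MAC} really gives for a MAC sphere: otherwise you can slide the center orthogonally to the affine hull of the decorations and the radius grows. The paper simply presupposes that a largest lattice $n$-enclosing sphere exists. So your logic for (c) is otherwise sound, but this particular justification must be replaced or dropped.
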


Using this lemma one can easily extend Theorem~\ref{thm:infinityMAC} to the following by essentially the same argument.

\begin{theorem}\label{thm:infinity-d-MAC}
There are infinitely many $d$-MAC numbers.
\end{theorem}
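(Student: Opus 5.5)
We follow the proof of Theorem~\ref{thm:infinityMAC} step by step in dimension $d$. For each $n\ge 0$ let $\rho_n^{(d)}$ denote the radius of the largest \textbf{lattice} $n$-enclosing sphere in ${\mathbb R}^d$, with $\rho_n^{(d)}=0$ if no such sphere exists.

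\emph{Step 1: $n$ is $d$-MAC if and only if $\rho_n^{(d)}=R_n^{(d)}$.} If $n$ is $d$-MAC, then $M_n^{(d)}$ is a lattice sphere by Lemma~\ref{lem:Lattice-d-MAC}, so $\rho_n^{(d)}\ge R_n^{(d)}$. The reverse inequality is automatic, because every lattice $n$-enclosing sphere is an $n$-enclosing sphere. Conversely, if $\rho_n^{(d)}=R_n^{(d)}$, the lattice sphere realizing $\rho_n^{(d)}$ is a largest $n$-enclosing sphere, so $n$ is $d$-MAC.

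\emph{Step 2: $\limsup_{n\to\infty}\rho_n^{(d)}=\infty$.} Up to lattice isometries we may place the center in the fundamental domain $[0,1/2]^d$, the analogue of Lemma~\ref{lem:inDelta}. A lattice sphere is determined by $d+1$ affinely independent lattice points on it. If the radius is at most $M$ and the center lies in this bounded region, those points lie in a bounded box, so there are only finitely many such lattice spheres. On the other hand, lattice spheres of arbitrarily large radius exist; for instance the sphere centered at the origin of radius $m$ passes through $\pm m e_i$. Each lattice sphere encloses some finite number $n$ of lattice points, so $\rho_n^{(d)}$ is at least its radius for that $n$. Since there are finitely many lattice spheres below each radius bound but infinitely many in total, the values $\rho_n^{(d)}$ are unbounded.

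\emph{Step 3: inductive construction of the $d$-MAC numbers.} Start with $n_1=0$, which is $d$-MAC: the largest empty sphere exists, namely the one centered at $(1/2,\dots,1/2)$ through the vertices of the unit cube. Suppose $n_1<\dots<n_\ell$ are exactly the $d$-MAC numbers $\le n_\ell$. Define
\[
n_{\ell+1}=\min\{n>n_\ell:\rho_n^{(d)}\ge \rho_{n_\ell}^{(d)}\},
\]
which exists by Step 2. For $n_\ell<k<n_{\ell+1}$ we have $\rho_k^{(d)}<\rho_{n_\ell}^{(d)}=R_{n_\ell}^{(d)}\le R_k^{(d)}$, using Lemma~\ref{lem:main-d}(a). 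By Step 1 each such $k$ is non-$d$-MAC. Lemma~\ref{lem:main-d}(c), applied with base $n_\ell$, then shows that $n_{\ell+1}$ is $d$-MAC. Thus the $d$-MAC numbers form an infinite sequence.

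\emph{Main obstacle.} The delicate point is the finiteness claim in Step 2. One must check that a lattice sphere of radius at most $M$ centered in $[0,1/2]^d$ is determined by lattice points in a bounded region, and that this gives finitely many spheres. Both facts are clear because $d+1$ affinely independent points determine a sphere and all of them lie within distance $M+\sqrt d$ of the origin. Everything else is a direct transcription of the two-dimensional argument, using Lemmas~\ref{lem:Lattice-d-MAC} and~\ref{lem:main-d} in place of Lemmas~\ref{lem:latticeCircle} and~\ref{lem:main}.
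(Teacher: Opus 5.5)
Your proof is correct and follows the paper's argument. The paper only says that the proof of Theorem~\ref{thm:infinityMAC} carries over using Lemmas~\ref{lem:Lattice-d-MAC} and~\ref{lem:main-d}; your Steps~1--3 are that transcription, with the base case $R_0^{(d)}=\sqrt d/2$ and the unboundedness of $\rho_n^{(d)}$ spelled out.

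One point in your ``main obstacle'' paragraph needs tightening. For $d\ge 3$, the paper's definition of a lattice sphere (at least $d+1$ lattice points on it) does not force $d+1$ \emph{affinely independent} decorations. For example, the spheres centered at $(1/2,1/2,t)$, $0<t<1/2$, through the four vertices of a face of the unit cube form a continuum of such spheres, so your finiteness claim fails as literally stated. You should instead define $\rho_n^{(d)}$ using spheres whose decorations affinely span ${\mathbb R}^d$. This is exactly what the perturbation argument behind Lemma~\ref{lem:Lattice-d-MAC} gives: otherwise you can slide the center along the subspace of points equidistant from the decorations and enlarge the radius. With that definition, your finiteness claim holds, and so does the existence of a largest lattice $n$-enclosing sphere, which Step~1 and Lemma~\ref{lem:main-d}(c) require.
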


Further, we can define the $d$-impacting index $I_n^{(d)}$ of a $d$-MAC number $n$ to be the number of consecutive non-$d$-MAC numbers following $n$. If $n$ is non-$d$-MAC then we set $I_n^{(d)}=-1.$ A $d$-MAC number $n$ is called \emph{strong} if $I_n^{(d)}>0$. The following are all the strong $3$-MAC numbers for $n\le 200$:

2, 4, 8, 16, 20, 24, 28, 30, 33, 40, 48, 52, 56, 68, 70, 72, 75, 78, 81, 84, 88, 93, 106, 112, 114, 118, 128, 136, 140, 145, 148, 152, 160, 179, 194, 196.

Each of these strong numbers $n$ has a unique $3$-MAC sphere $M_n^{(3)}$. About half of these numbers have their $d$-impacting index equal to 1. When $I_n^{(3)}>1$ we list them in Table \ref{tbl:strong-d-MAC}.

\begin{table}[h]
\caption{Distribution of strong $3$-MAC numbers $n$ with their impacting index $>1$, $0\le n\le 200$. $D_n^{(3)}$ is the number of decorations on the unique $M_n^{(3)}$, satisfying $D_n^{(3)}\ge 2I_n^{(3)}+4$.}
\centering\begin{tabular}{|c|c|c|c|c|c|c|c|c|c|c|}
\hline
$n$  & 8& 20& 24& 33& 40& 56& 72& 81& 88& 93 \\
\hline
$I_n^{(3)}$ & 7& 3& 2& 6& 3& 11& 2& 2& 4& 10\\
\hline
$D_n^{(3)}$ &24 & 16& 8& 24& 13& 32& 12& 12& 48& 30\\
\hline
$n$  &118& 128& 136& 140& 148& 152& 160& 179& 196 &\\
\hline
$I_n^{(3)}$&   9& 3& 3& 2& 2& 2& 18& 4& 3 &\\
\hline
$D_n^{(3)}$& 32& 16& 24& 9& 12& 16& 48& 24& 16 &\\
\hline
\end{tabular}
\label{tbl:strong-d-MAC}
\end{table}

The following result is a generalization of Theorem~\ref{thm:DecorStrongMAC}.

\begin{theorem}\label{thm:DecorStrong-d-MAC}
For each strong $d$-MAC number $n$ with $d$-impacting index $I_n^{(d)}$, its $d$-MAC sphere $M_n^{(d)}$ has at least $2I_n^{(d)}+d+1$ decorations.
\end{theorem}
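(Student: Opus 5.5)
We argue by contradiction, as in the proof of Theorem~\ref{thm:DecorStrongMAC}, replacing the diameter by a hyperplane through the center of the sphere. Write $I=I_n^{(d)}$, $R=R_n^{(d)}$, and let $O$ be the center of $M_n^{(d)}$. Suppose $M_n^{(d)}$ has at most $2I+d$ decorations.

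\textbf{Step 1: choose a good hyperplane.} We look for a hyperplane $H$ through $O$ that contains at least $d$ of the decorations. Since $d-1$ points together with $O$ always span an affine subspace of dimension at most $d-1$, such an $H$ exists for any $d-1$ decorations. To do better, we take one more decoration and try to fit all $d$ of them into $H$. If the decorations are in general position relative to $O$ this may fail, so the count must be handled carefully.

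\begin{itemize}
\item If $H$ contains at least $d$ decorations, at most $2I$ lie off $H$, so one open side of $H$ carries at most $I$ of them.
\item If we can only guarantee $d-1$ on $H$, then $2I+1$ lie off $H$ and one side carries at most $I$ of them, since $\lfloor (2I+1)/2\rfloor = I$.
\end{itemize}

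So it suffices to take $H$ through $O$ and any $d-1$ decorations. Then at most $2I+1$ decorations lie off $H$, and some side $S$ of $H$ contains at most $I$ of them. We can also tilt $H$ slightly about the span of those $d-1$ points so that exactly $j\le I$ decorations lie strictly on side $S$. Because we only need an upper bound, ties are harmless.

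\textbf{Step 2: perturb.} Translate $O$ by an infinitesimal amount $\delta$ along the unit normal of $H$ pointing into $S$, keeping the radius equal to $R$. Decorations strictly on side $S$ move into the interior, and those strictly on the other side move outside. Decorations on $H$ move to distance $\sqrt{R^2+\delta^2}>R$ from the new center, so they also fall outside. Points strictly inside or outside the sphere stay where they are for $\delta$ small, by discreteness of the lattice.

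The new sphere has radius $R$ and encloses exactly $n+j$ points, where $0\le j\le I$. If $j=0$ we instead choose the side with the most decorations, subject to at most $I$. We want $j\ge1$, but only $j\le I$ is needed to reach the contradiction below. Since the interior count is an open condition, we can then enlarge the radius slightly, again by discreteness, and obtain an $(n+j)$-enclosing sphere of radius strictly greater than $R$.

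\textbf{Step 3: contradiction.} If $1\le j\le I$, then $n+j$ is non-$d$-MAC with $d$-LUBOR $R$ by Lemma~\ref{lem:main-d}(b), which contradicts the sphere just constructed. The main obstacle is guaranteeing $j\ge1$. We handle it by noting that at least $2I+1\ge 1$ decorations lie off $H$ whenever the total is at least $d$, which holds by Lemma~\ref{lem:Lattice-d-MAC}. We then rotate $H$ about the $(d-2)$-dimensional span of $O$ and the $d-1$ chosen decorations.

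During this rotation, the number of off-$H$ decorations on side $S$ changes by one at a time as $H$ sweeps through them, and by generic choice we may assume it never passes two at once. After a half-turn, the roles of the two sides are exchanged. The count therefore takes every value between $a$ and $b$, where $a+b\ge 2I+1$ counts the off-$H$ decorations. In particular some position has count in $[1,I]$, provided $I\ge1$, which holds since $n$ is strong. This yields the contradiction, so $M_n^{(d)}$ has at least $2I_n^{(d)}+d+1$ decorations.
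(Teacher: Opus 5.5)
Your Steps 1--2 and the case $1\le j\le I$ of Step 3 are the paper's argument. You take a hyperplane through $O$ and $d-1$ decorations, note that at most $2I+1$ decorations lie off it, push the center toward the side carrying $j\le I$ of them, enlarge, and contradict Lemma~\ref{lem:main-d}(b). You are actually more careful than the paper here: it asserts the pushed sphere encloses exactly $n+I_n$ points, whereas you correctly get $n+j$.

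The gap is your treatment of $j=0$: the rotation argument fails, for three reasons.
\begin{enumerate}
\item $O$ together with $d-1$ decorations spans an affine subspace of dimension up to $d-1$. In general position this span is $H$ itself (for $d=2$ it is the line through $O$ and the one decoration), so there is no pencil of hyperplanes containing it to rotate through. If you rotate instead about a flat through $O$ and only $d-2$ decorations, up to $2I+2$ decorations can lie off $H$. The lighter side can then carry $I+1$, and no contradiction follows.
\item The assumption that $H$ never crosses two decorations at once cannot be arranged by a ``generic choice'', because the decorations and the axis are fixed. So the count may jump over $[1,I]$.
\item ``At least $2I+1$ decorations lie off $H$'' and $a+b\ge 2I+1$ reverse your own hypothesis, which gives at most $2I+1$. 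Moreover, for $d\ge 3$ the bare count $\ge d+1$ in Lemma~\ref{lem:Lattice-d-MAC} does not even guarantee one decoration off $H$.
\end{enumerate}

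No rotation is needed, because $j=0$ already yields a contradiction. If the chosen open side contains no decoration, your Step 2 perturbation gives a sphere of radius $R=R_n^{(d)}$ with no lattice points on it and exactly $n$ inside. Enlarging it slightly produces an $n$-enclosing sphere of radius larger than $R_n^{(d)}$, contradicting that $n$ is $d$-MAC with $d$-MAC radius $R_n^{(d)}$. This is the perturbation behind Lemma~\ref{lem:Lattice-d-MAC}. It shows that the decorations of $M_n^{(d)}$ never lie in a closed half-space bounded by a hyperplane through $O$, so $j\ge 1$ holds automatically.

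Replace the rotation paragraph with this observation, and remove the unneeded tilting remark in Step 1. Your proof is then complete and coincides with the paper's.
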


\begin{proof}
The proof is similar to that of Theorem~\ref{thm:DecorStrongMAC}. For simplicity, let's drop the superscript $(d)$ in the rest of the proof.

Suppose there are at most $2I_n+d$ decorations on $M_n$. From Lemma~\ref{lem:Lattice-d-MAC}, we know there are at least $d+1$ decorations on $M_n$. Thus, there is a hyperplane $\mathcal P$ that passes through $d-1$ decorations. Then, on one side of $\mathcal P$ there are at most $I_n$ decorations not lying on $\mathcal P$. By moving the center of the sphere towards these points along the direction perpendicular to $\mathcal P$ with an infinitesimal distance we see that there is a sphere of radius $R_n$ that encloses exactly $n+I_n$ points. By discreteness of lattice points, we can further increase the radius of this circle slightly while keeping the number of interior points unchanged. This contradicts to the condition that $n+I_n$ is non-$d$-MAC and that $R_n$ is its LUBOR by Lemma\ref{lem:main-d}(b). This contradiction implies that there are at least $2I_n+d+1$ decorations on $M_n$.
\end{proof}

Note that Theorem~\ref{thm:DecorStrong-d-MAC} is tight when $d=3$ because we see that $I_{16}^{(3)}=1$ while there are exactly $D_{16}^{(3)}=6$ decorations on its $3$-MAC sphere $M_{16}^{(3)}$ centered at $(1/3,1/3,0)$ with radius $\sqrt{26}/3$. The bound is also reached with $n=24$ since $D_{24}^{(3)}=8$ and $I_{24}^{(3)}=2$.

Unlike the 2-dimension situation where we found two 2-MAC numbers $n$ (i.e. $n=322$ and $n=941$) each of which has two lattice non-equivalent MAC $n$-circles, we have not found any 3-MAC number $n$ with two lattice non-equivalent MAC $n$-spheres.

\begin{problem}\label{prob:3-decor}
Is there a 3-MAC number $n$ which has two lattice non-equivalent MAC $n$-spheres?
\end{problem}

Although $30$, 40 and 84 are all strong $3$-MAC numbers, there are 9 decorations on $M_{30}^{(3)}$, 13 decorations on $M_{40}^{(3)}$, and 9 decorations on $M_{84}^{(3)}$. This is in contrast to the situation in ${\mathbb R}^2$, where Conjecture~\ref{conj:DecorStrongMACEven} claims that the number of decorations for MAC circles should always be even.

From the numerical evidence we propose the following conjectures for general $d\ge 2$.
\begin{conjecture}\label{conj:DecorStrong-d-MAC}
Let $d\ge2$. Then
\begin{enumerate}
  \item [(a)] For each $n\in\mathbb N$, there is an $n$-enclosing $(d-1)$-dimensional sphere of volume $n$.

  \item [(b)] There are infinitely many non-$d$-MAC numbers. Or, equivalently, there are infinitely many strong $d$-MAC numbers.

  \item [(c)] The set of $d$-impacting indices $\{I_n^{(d)}:n\in {\mathbb N}\}$ is infinite.
\end{enumerate}
\end{conjecture}

Note that the volume of an $d$-dimensional sphere with radius $r$ is given by $\pi^{d/2} r^d/\Gamma(d/2 + 1)$, where $\Gamma$ is Euler's Gamma function.
\begin{theorem}
For each $n\in\mathbb N$, the first inequality of the following
\begin{equation*}
     \sqrt[d]{n\Gamma(d/2 + 1)}/\sqrt{\pi}< R_n^{(d)} <\sqrt[d]{n\Gamma(d/2 + 1)}/\sqrt{\pi}+\sqrt{d}
\end{equation*}
holds if Conjecture~\ref{conj:DecorStrong-d-MAC}(a) is true, and the second inequality always holds.
\end{theorem}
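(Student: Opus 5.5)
The plan mirrors the two-dimensional Lemma~\ref{lem:Rbound}, with the unit square replaced by the unit cube, whose diameter is $\sqrt{d}$. Write $V_d=\pi^{d/2}/\Gamma(d/2+1)$ for the volume of the unit ball, so that $r_0:=\sqrt[d]{n\Gamma(d/2+1)}/\sqrt{\pi}$ is the radius of a ball of volume $n$. The two inequalities will be handled separately.

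\emph{Upper bound (unconditional).} Let $S$ be any $n$-enclosing sphere with center $c$ and radius $r$; we show $r<r_0+\sqrt d$. Then $R_n^{(d)}\le r_0+\sqrt d$ follows, since $R_n^{(d)}$ is either a maximum or a supremum of such $r$, and we afterwards upgrade this to a strict inequality.

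Assume $r\ge\sqrt d$; otherwise there is nothing to prove. To each lattice point $z$ attach the half-open unit cube $z+[0,1)^d$; these cubes tile $\mathbb R^d$. Every point $x$ of the open ball $B(c,r-\sqrt d)$ lies in the cube of the lattice point $z=\lfloor x\rfloor$. This $z$ satisfies $|z-x|<\sqrt d$; in fact $|z-x|\le\sqrt d$ with each coordinate difference in $[0,1)$, so the inequality is strict. Hence $|z-c|<r$, and $z$ is an interior lattice point of $S$. Therefore $B(c,r-\sqrt d)$ is covered by the $n$ cubes of the interior points, giving $V_d(r-\sqrt d)^d\le n$, i.e.\ $r\le r_0+\sqrt d$.

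For strictness, note that the $n$ cubes form a bounded union containing the ball. Equality of volumes would force the closure of the ball to equal a finite union of closed unit cubes, which is impossible since a ball is not a polytope. So $r<r_0+\sqrt d$ for each such sphere.

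The remaining issue is the supremum in the non-$d$-MAC case. There I would use Lemma~\ref{lem:main-d}(b): the $d$-LUBOR equals the $d$-MAC radius $R_k^{(d)}$ of the preceding $d$-MAC number $k<n$. That radius is attained by an actual $k$-enclosing sphere, so the strict bound already proved gives $R_k^{(d)}<\sqrt[d]{k\Gamma(d/2+1)}/\sqrt\pi+\sqrt d\le r_0+\sqrt d$. (If $n$ is $d$-MAC, the radius is attained directly.) I expect this strictness bookkeeping to be the main point needing care.

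\emph{Lower bound (assuming Conjecture~\ref{conj:DecorStrong-d-MAC}(a)).} The conjecture supplies an $n$-enclosing sphere of volume exactly $n$, that is, of radius $r_0$. Hence $R_n^{(d)}\ge r_0$.

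To get strict inequality, perturb this sphere. If it is not a $d$-MAC sphere, then either $n$ is $d$-MAC and $R_n^{(d)}$ is strictly larger, or $n$ is non-$d$-MAC and the supremum is not attained, so again $R_n^{(d)}>r_0$. If it is the $d$-MAC sphere, Lemma~\ref{lem:Lattice-d-MAC} would be used; I would instead enlarge the radius slightly. Since lattice points form a discrete set, a sphere of radius $r_0$ containing no boundary lattice points can be enlarged while keeping exactly $n$ interior points.

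So the plan is to first translate the volume-$n$ sphere slightly so that no lattice points lie on its boundary while the interior count stays $n$. A generic small translation achieves this whenever the boundary points can all be pushed to the same side. If that fails, I would fall back on a counting argument: the sphere's volume $n$ equals the number of interior points, which forces boundary lattice points to exist in a configuration one can exploit by shifting the center. This perturbation step is the delicate part of the lower bound.
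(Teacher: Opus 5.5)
The gap is in the strict lower bound; your upper bound is fine. The upper bound is the argument the paper points to, namely the $d$-dimensional version of the unit-cell covering behind Lemma~\ref{lem:Rbound}, with cells of diameter $\sqrt d$. Your strictness bookkeeping there, including the supremum case via Lemma~\ref{lem:main-d}(b), is correct.

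For the lower bound, the paper treats the first inequality as immediate from Conjecture~\ref{conj:DecorStrong-d-MAC}(a). That conjecture only gives $R_n^{(d)}\ge r_0$, so you are right that strictness needs an argument. The one you sketch cannot succeed, though. After your (correct) reduction, the only case left is that the volume-$n$ sphere $S$ is itself a $d$-MAC sphere. In that case, by definition, no perturbation of $S$ produces a larger $n$-enclosing sphere. Concretely, the decorations of a $d$-MAC sphere never lie in an open hemisphere; otherwise moving the center away from them and enlarging would beat it, as in the proof of Theorem~\ref{thm:nonMACdecor}. So your translate-then-enlarge subcase is vacuous exactly where it is needed. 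The fallback ``counting argument'' has no content either: having volume $n$ and $n$ interior points imposes no constraint on the local configuration of decorations. What is missing is a global reason why this case cannot occur; a local perturbation of $S$ cannot supply it.

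One way to supply it is to average over centers, and this even makes Conjecture~\ref{conj:DecorStrong-d-MAC}(a) unnecessary for this inequality.
\begin{enumerate}
\item[(i)] Put $N(c)=\#\{z\in\mathbb Z^d:|z-c|<r_0\}$. Since the sets $[0,1)^d-z$ tile $\mathbb R^d$, we get $\int_{[0,1)^d}N(c)\,dc=V_d r_0^d=n$.
\item[(ii)] $N$ is not a.e.\ constant. Take $p$ with $|p|=r_0$ lying on no sphere $|c-z|=r_0$ with $z\neq 0$. Such $p$ exists because only finitely many of these spheres meet $\{|c|=r_0\}$, each in a nowhere dense set. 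By local finiteness, near $p$ the function $N$ equals a constant plus the indicator of $\{|c|<r_0\}$.
\item[(iii)] $N$ is integer-valued with mean $n$ but not a.e.\ equal to $n$, so $N\le n-1$ on a set of positive measure.
\item[(iv)] Pick $c$ in that set off the countably many hyperplanes $\{|c-z|=|c-z'|\}$. Then the distances $d_1<d_2<\cdots$ from $c$ to the lattice points are distinct, and $d_n\ge r_0$.
\item[(v)] The sphere of radius $d_{n+1}$ about $c$ encloses exactly $n$ lattice points, so $R_n^{(d)}\ge d_{n+1}>r_0$.
\end{enumerate}
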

\begin{proof}
We only need to show the second inequality. This can be achieved by an argument similar to the proof of \cite[Theorem 9]{Zhao2025Aug}.
\end{proof}

\medskip
\noindent
{\bf Acknowledgments.}
The author thanks the support of the Jacobs Prize from The Bishop's School. He is also grateful for the immense help provided by Dr. Marcus Jaiclin at The Bishop's School with the numerical computation.

\begin{figure}[h]
\centering
  \includegraphics[scale=0.4]{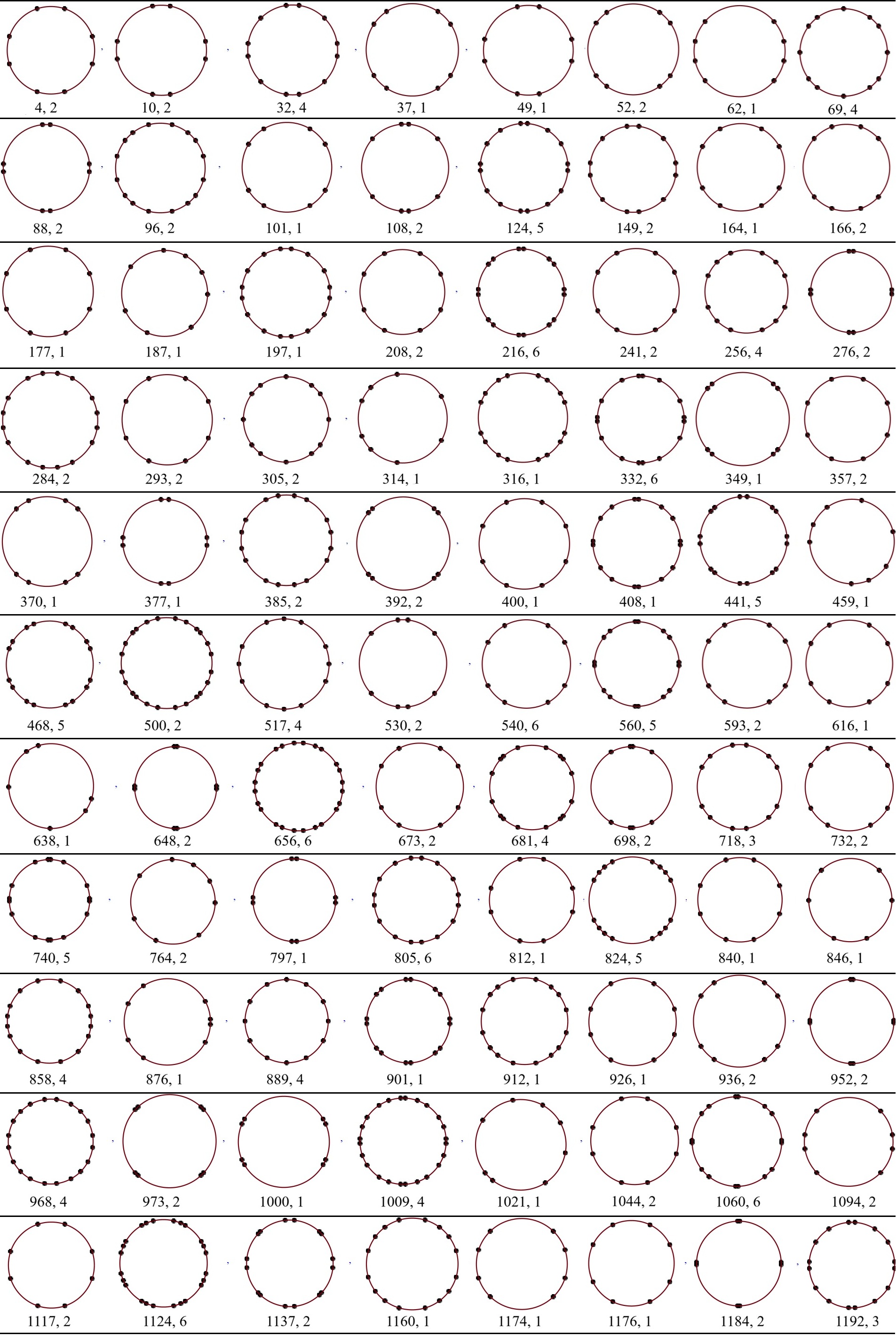}
\end{figure}

\begin{figure}[h]
\centering
  \includegraphics[scale=0.4]{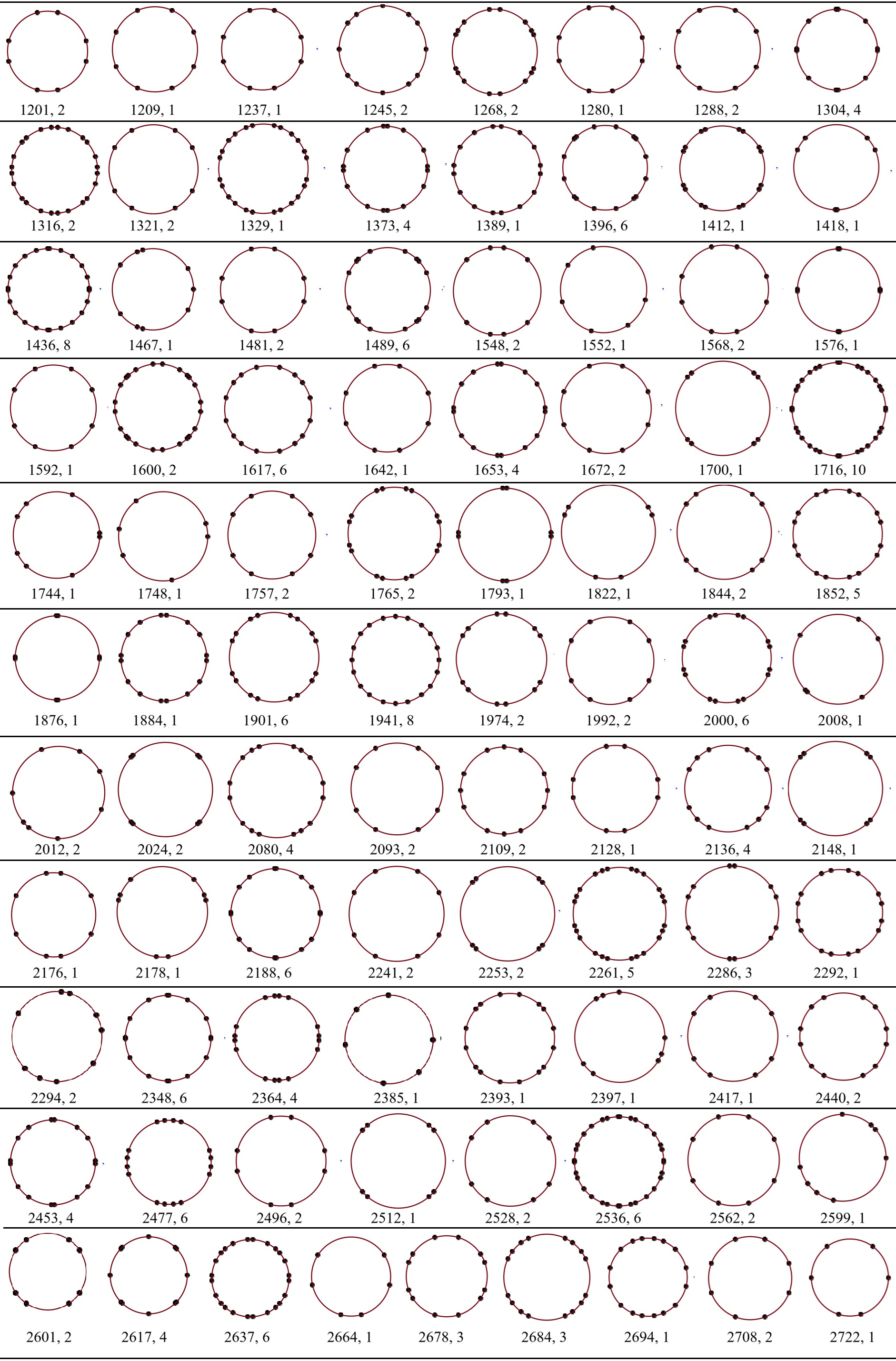}
\caption{All strong MAC numbers with their decorated MAC circles $M_n$ and impacting indices.}
\label{fig:AllMACcircle2a}
\end{figure}

\end{document}